\numberwithin{equation}{section}
\newtheorem{theorem}{Theorem}[section]
\newtheorem{lemma}[theorem]{Lemma}
\newtheorem{problem}[theorem]{Problem}
\theoremstyle{definition}
\theoremstyle{remark}
\begin{document}
\title{  Subsonic  flows with a contact discontinuity in a two-dimensional  finitely long curved nozzle }
\author{Shangkun Weng\thanks{School of Mathematics and Statistics, Wuhan University, Wuhan, Hubei Province, 430072, People's Republic of China. Email: skweng@whu.edu.cn}\and Zihao Zhang\thanks{School of Mathematics and Statistics, Wuhan University, Wuhan, Hubei Province, 430072, People's Republic of China. Email: zhangzihao@whu.edu.cn}}
\date{}
\maketitle
\newcommand{\de}{{\mathrm{d}}}
\def\div{{\rm div\,}}
\def\curl{{\rm curl\,}}
\newcommand{\ro}{{\rm rot}}
\newcommand{\sr}{{\rm supp}}
\newcommand{\sa}{{\rm sup}}
\newcommand{\va}{{\varphi}}
\newcommand{\me}{\mathcal{M}}
\newcommand{\ml}{\mathcal{V}}
\newcommand{\md}{\mathcal{D}}
\newcommand{\mg}{\mathcal{G}}
\newcommand{\mh}{\mathcal{H}}
\newcommand{\mf}{\mathcal{F}}
\newcommand{\ms}{\mathcal{S}}
\newcommand{\mt}{\mathcal{T}}
\newcommand{\mn}{\mathcal{N}}
\newcommand{\mb}{\mathcal{P}}
\newcommand{\mm}{\mathcal{B}}
\newcommand{\mj}{\mathcal{J}}
\newcommand{\mk}{\mathcal{K}}
\newcommand{\my}{\mathcal{U}}
\newcommand{\mw}{\mathcal{W}}
\newcommand{\mq}{\mathcal{Q}}
\newcommand{\ma}{\mathcal{L}}
\newcommand{\mc}{\mathcal{C}}
\newcommand{\mi}{\mathcal{I}}
\newcommand{\n}{\nabla}
\newcommand{\e}{\tilde}
\newcommand{\m}{\omega}
 \newcommand{\q}{{\rm R}}
\newcommand{\p}{{\partial}}
\newcommand{\z}{{\varepsilon}}
\renewcommand\figurename{\scriptsize Fig}
\begin{abstract}
 In this paper,  we establish the existence and uniqueness of subsonic flows with a contact discontinuity in a two-dimensional  finitely long slightly curved nozzle by prescribing the entropy, the Bernoulli's quantity and  the horizontal mass flux distribution at the entrance and the flow angle at the exit.  The problem is formulated as
  a nonlinear  boundary value problem for a hyperbolic-elliptic mixed system with a free boundary.   The Lagrangian transformation is employed  to straighten  the contact discontinuity and  the Euler system is reduced to a second-order nonlinear elliptic equation for the stream function.   One of the key points in the analysis is to   solve   the associated
   linearized  elliptic boundary value
problem with mixed boundary conditions in a  weighted H\"{o}lder space.  Another one is to  employ the implicit function theorem to locate  the contact discontinuity.
\end{abstract}
\begin{center}
\begin{minipage}{5.5in}
Mathematics Subject Classifications 2020: 35J15, 35L65,  76J25, 76N15.\\
Key words: steady Euler system, contact discontinuity, free boundary, structural stability, Lagrangian transformation.
\end{minipage}
\end{center}
\section{Introduction  }\noindent
\par In this paper,   we concern subsonic  flows with a contact discontinuity  in a finitely long slightly
curved nozzle, which can be described by  the two-dimensional steady compressible   Euler system:
\begin{align}\label{1-1}
\begin{cases}
\p_{x_1}(\rho u_1)+\p_{x_2}(\rho u_2)=0,\\
\p_{x_1}(\rho u_1^2)+\p_{x_2}(\rho u_1 u_2)+\p_{x_1} P=0,\\
\p_{x_1}(\rho u_1u_2)+\p_{x_2}(\rho u_2^2)+\p_{x_2} P=0,\\
\p_{x_1}(\rho u_1 E+u_1 P)+\p_{x_2}(\rho u_2 E+u_2 P)=0,\\
\end{cases}
\end{align}
  where ${\bf u}=(u_1,u_2)$ is the velocity, $\rho$ is the density, $P$ is the pressure, $E$ is the
 energy, respectively.
 For polytropic gas, the equation of state and the
energy are of the form
\begin{equation*}
P= A(S)\rho^{\gamma}, \quad{\rm {and}}\quad E=\frac{1}{2}|{\bf u}^2|+\frac{ P}{(\gamma-1)\rho},
\end{equation*}
where  $A(S)= R e^{S}$ and $\gamma\in (1,+\infty)$, $R$ are positive constants. Denote the Bernoulli's function and the local sonic speed by $B=\frac12|{\textbf{u}}|^2+\frac{\gamma P}{(\gamma-1)\rho}$  and $ c(\rho,A)=\sqrt{A\gamma} \rho^{\frac{\gamma-1}{2}}$, respectively. It is well-known that  the system \eqref{1-1} is hyperbolic for supersonic flows ($ |\textbf{u}|>c(\rho,A) $), and hyperbolic-elliptic mixed for subsonic flows ($ |\textbf{u}|<c(\rho,A) $).
\par   We first introduce the notation of weak solution of the full Euler system \eqref{1-1}.  Suppose that a domain $\md $ in $ \mathbb{R}^2 $
is divided by a $C^1$-curve  $ \Gamma$
 into sub-domains $\md^+ $ and $\md^- $ such that $ \md= \md^- \cup\Gamma \cup \md^+ $.
Assume that $ \bm U = (\rho, u_1, u_2,P)$ is a $C^1$ solution of Euler equation \eqref{1-1} in each domain $ \md^+ $ and $ \md^- $ and is continuous up to the boundary $ \Gamma $ in each sub-domain. If   $ \bm U $ is a weak solution for \eqref{1-1} in domain $ \md $, by integration by parts, we see that  the  Rankine-Hugoniot conditions hold along $\Gamma$  almost everywhere:
\begin{equation}\label{1-2}
\begin{cases}
n_1[\rho u_1]+n_2[\rho u_2]=0,\\
n_1[\rho u_1^2]+n_1[P]+n_2[\rho u_1u_2]=0,\\
n_1[\rho u_1u_2]+n_2[\rho u_2^2]+n_2[P]=0,\\
n_1[ \rho u_1(E+\frac{P}{\rho})]+n_2[\rho u_2( E+\frac{P}{\rho})]=0,\\
\end{cases}
\end{equation}
where $ \mathbf{n} = (n_1, n_2) $  is the unit normal vector to $\Gamma$, and $[F](\mathbf{x}) = F_+(\mathbf{x})- F_-(\mathbf{x})$
denotes the jump across the  curve $\Gamma$ for a piecewise smooth function $ F $.
 Let $ \bm \tau= (\tau_{1},\tau_{2})$ as the unit tangential vector to $\Gamma$, which means that $ \mathbf{n}\cdot{\bm \tau}= 0$. Taking the dot product of $(\eqref{1-2}_2,\eqref{1-2}_3)$ with $ \mathbf{n}$  and $ \bm \tau $ respectively, we have
\begin{equation}\label{1-3}
\rho(\mathbf{u}\cdot\mathbf{n})[\mathbf{u}\cdot \bm \tau]_{\Gamma}= 0, \quad
[\rho(\mathbf{u}\cdot\mathbf{n})^2+P]_{\Gamma}= 0.
\end{equation}
 Assume that $ \rho> 0 $  in $ \bar \md $, \eqref{1-3} is divided into  two subcases:
 \begin{itemize}
 \item  $ \mathbf{u}\cdot\mathbf{n} \neq 0 $ and $[\mathbf{u}\cdot \bm \tau]_{\Gamma}=0 $. In this case,  the curve $ \Gamma $ is a shock;
  \item    $ \mathbf{u}\cdot\mathbf{n}=0 $ and $ [P]=0 $ on $\Gamma$. In this case, the curve $ \Gamma $ is  a contact discontinuity.
 \end{itemize}
\par Admissible shocks, centered rarefaction waves and
contact discontinuities are fundamental wave patterns in quasi-linear hyperbolic conservation laws.  The stability analysis of these elementary waves is very important both in physics and mathematics.  In the book \cite{CF48},  Courant and Friedrichs had presented some  nonlinear phenomenas for steady compressible flows, such as Mach reflection,  jet
flows, and their interactions. All these phenomenas are formulated by  elementary waves including contact discontinuities. So to understand the stability of contact discontinuity is an important step in studying these phenomenas.
 \par In recent years, there have been many
interesting results on the contact discontinuity   for various
physical situations.   The stability  of subsonic flows with a contact discontinuity  in infinite  nozzles was established in  \cite{BM09} and \cite{ BP19,PB19} with a Helmholtz decomposition.    The global existence and uniqueness  of the subsonic contact discontinuity with large vorticity in infinity long  nozzles were obtained  in \cite{CHWX19} by the theory of compensated compactness, which is not a perturbation around piecewise constant solutions.   The stability of supersonic flat contact discontinuity and transonic flat contact discontinuity for 2-D steady Euler flows in a finite nozzle was established in \cite{HFWX19,HFWX21}.  The stability  of two-dimensional  transonic contact discontinuity over a solid wedge  and three-dimensional supersonic and  transonic contact discontinuity were established in \cite{CYK13,CKY13} and \cite{WY15,WY13,WF15}.  The contact discontinuity in the Mach
reflection was also studied in \cite{CF07,CHF13}.  Recently,  the existence, uniqueness and stability of
subsonic flows past an airfoil with a vortex line were established in \cite{CXZ22}. The  well-posedness theory
for the steady subsonic jet flow
with a shock and a contact discontinuity was established in \cite{PW22}. 
\par   The contact discontinuity is part of the  solution  and is unknown, thus  it is a free boundary that separates the subsonic flow into two layers of the nozzle. Since the tangent of the  contact discontinuity is parallel to the  velocity of the flow  on its both sides, we can employ  the Lagrangian transformation to straighten  the free boundary.  Then by introducing a stream function and solving the
hyperbolic equations for the entropy and the Bernoulli's quantity, the Euler system  is reduced to a nonlinear second-order  equation for the stream function  in the subsonic region.  The
main ingredient   is to   solve   the associated
   linearized  elliptic boundary value
problem  with mixed boundary conditions
in a weighted H\"{o}lder space.
\par The other key ingredient in our  analysis is to use the implicit function theorem  to locate the contact discontinuity. This idea is motivated by the discussion of the airfoil problem in \cite{CXZ22}. For the problem of subsonic flows past an airfoil, the vortex line attached to the trailing edge is a contact discontinuity and is treated as a free boundary, the authors employed the implicit function theorem to solve this problem.   In this paper, we modified the approach in \cite{CXZ22} to study our problem. We choose a suitable H$\ddot{\rm{o}}$lder space and design a proper  map to verify the properties of the differential of the map. Then  the contact discontinuity can be located by the implicit function theorem.
\par This paper will be arranged as follows. In Section 2, we formulate the problem in detail and state the main result. In Section 3, we  reformulate the    problem in the Lagrangian coordinates  and   state   the main steps to solve the free boundary problem 3.1.
In Section 4, we  linearize the nonlinear  problem and solve the linear equation in a suitable weighted H$\ddot{\rm{o}}$lder space.
 In Section 5, we choose a suitable  H$\ddot{\rm{o}}$lder space and design a proper  map to verify the conditions in the implicit function theorem. In Section 6, we finish the proof of the main theorem.
   \section{Mathematical problem and the main result}\noindent
 \par In this section, we give a detailed formulation of subsonic flows with a contact discontinuity and state the main result.   Firstly, we consider a special class of  subsonic flows with a straight contact discontinuity in a finitely long flat nozzle.
\par The  flat nozzle (Fig 1) of the length $ L $ is given by
\begin{equation*}
\Omega_b:=\{(x_1,x_2):0<x_1<L,\ -1<x_2<1\}.
\end{equation*}
\begin{figure}
  \centering
  \includegraphics[width=11cm,height=5cm]{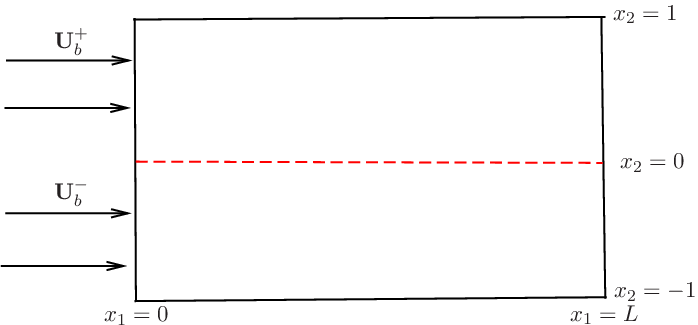}
  \caption{Subsonic flows with a contact discontinuity in a finitely long flat nozzle}
\end{figure}
Consider two layers of steady  Euler flows separated by the line $ x_2=0 $ satisfying the  properties:
     \begin{enumerate}[(i)]
\item The velocity and density of the top and bottom layers are given by $ (u_b^{+},0),\rho_b^{+} $ and $ (u_b^{-},0),\rho_b^{-} $, where $ u_b^\pm>0 $ and   $\rho_b^\pm>0$;
  \item the pressure of both the top and bottom layers  is given by the same positive constant $ P_b $;
\item the  flows in the   top and bottom layers are subsonic, i.e.,
 \begin{equation*}
 (u_b^\pm)^2<\frac{\gamma P_b}{\rho_b^\pm}.
\end{equation*}
\end{enumerate}
Then
\begin{equation}\label{2-1}
 \bm{U}_b=
  \begin{cases}
   \bm{U}_b^{+}:=(\rho_b^{+},u_{b}^{+},0,P_b),  \quad {\rm{for}}\quad (x_1,x_2)\in (0,L)\times (0,1),\\
   \bm{U}_b^{-}:=(\rho_b^{-},u_{b}^{-},0,P_b),  \quad {\rm{for}}\quad (x_1,x_2)\in (0,L)\times (-1,0),\\
  \end{cases}
 \end{equation}
with a contact discontinuity on the line $ x_2=0 $  satisfy  the steady Euler system \eqref{1-1} in the sense of weak solution,
  which will be called the background solutions in this paper.
  This paper is going to establish the structural
stability of these background  solutions  under the perturbations of suitable boundary conditions on the entrance and exit  and the upper and lower nozzle walls.
\par  The two-dimensional finitely long curved  nozzle $ \Omega $ (Fig 2) is  described by
\begin{equation}\label{2-2}
\Omega:=\{(x_1,x_2):0<x_1<L, g^-(x_1)<x_2<g^+(x_1)\},
\end{equation}
 where  $g^\pm(x_1)$ are small perturbations of the straight walls $ x_2=\pm 1 $, respectively. Furthermore,   $g^\pm(x_1)$ satisfy
  \begin{equation*}
  g^\pm(x_1)\in C^{2,\alpha}([0,L]) \quad {\rm{and}}\quad g^\pm(0)=\pm1.
  \end{equation*}
  The  upper and lower  boundaries of the nozzle are denoted by $ \Gamma_w^+ $ and $ \Gamma_w^- $, i.e;
\begin{equation}\label{2-3}
\Gamma_w^\pm:=\{(x_1,x_2):  0<x_1<L,\ x_2=g^\pm(x_1)\}.
\end{equation}
\begin{figure}
  \centering
  \includegraphics[width=12cm,height=5cm]{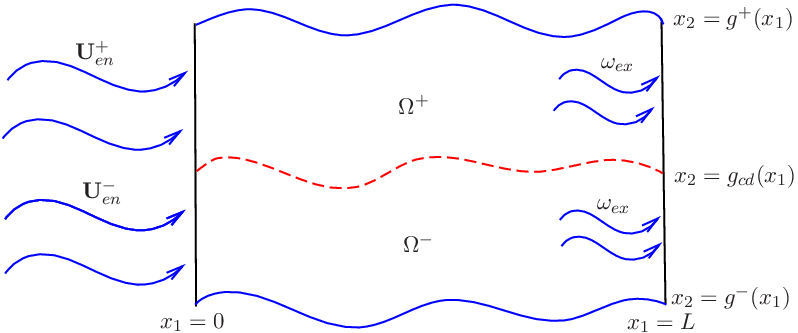}
  \caption{Subsonic flows with a contact discontinuity in a finitely long  curved nozzle}
\end{figure}
 The exit of the nozzle is denoted by
\begin{equation}\label{2-4}
\Gamma_{L}:=\{(x_1,x_2):   x_1=L,\ g^-(L)<x_2<g^+(L)\}.
\end{equation}
The entrance of the nozzle  is separated into two parts:
\begin{equation}\label{2-5}
\begin{aligned}
\Gamma_{0}^+:=\{(x_1,x_2):   x_1=0,\ 0<x_2<1\},   \quad \Gamma_{0}^-:=\{(x_1,x_2):  x_1=0,\ -1<x_2<0\}.
\end{aligned}
\end{equation}
\par At the entrance $ \Gamma_0^\pm $, we prescribe the boundary data   for  the entropy  $A $,  the Bernoulli's quantity $ B $ and  the horizontal mass distribution  $ J=\rho u_1 $:
\begin{equation}\label{2-6}
(A,B,J)(0,x_2)=
\begin{cases}
 \bm {U}_{en}^+(x_2):=(A_{en}^+,B_{en}^+,J_{en}^+)(x_2),\quad {\rm{on}}\quad \Gamma_0^+,\\
  \bm {U}_{en}^-(x_2):=(A_{en}^-,B_{en}^-,J_{en}^-)(x_2),\quad {\rm{on}}\quad \Gamma_0^-.
  \end{cases}
  \end{equation}
  At the exit, the flow angle $ \omega=\frac{u_2}{u_1}$ is prescribed by
 \begin{equation}\label{2-7}
  \omega(L,x_2)= \omega_{ex}(x_2), \quad {\rm{on}}\quad \Gamma_{L}.
  \end{equation}
  Here $ (\bm {U}_{en}^+,\bm {U}_{en}^-)(x_2)\in \left(C^{1,\alpha}([0,1])\right)^3\times \left(C^{1,\alpha}([-1,0])\right)^3 $ and $\omega_{ex}(x_2)\in C^{2,\alpha}([g^-(L),g^+(L)]) $ are close to the background solution in some sense that will be described later.
\par  We expect the flow in the nozzle  will be separated by a  contact discontinuity   $ \Gamma:=\{x_2=g_{cd}(x_1),0<x_1<L\} $ with $ g_{cd}(0)=0 $, and  denote
\begin{equation*}
\Omega^+:=\Omega\cap\{g_{cd}(x_1)<x_2<g^+(x_1)\}, \quad \Omega^-:=\Omega\cap\{ g^-(x_1)<x_2<g_{cd}(x_1)\}.
\end{equation*}
 Let
\begin{equation}\label{2-8}
{\bm {U}}(x_1,x_2)=
  \begin{cases}
   {\bm {U}}^+(x_1,x_2):=(\rho^+,u_1^+,u_{2}^+,P^+)(x_1,x_2)\quad {\rm{in}}\quad   \Omega^+,\\
   {\bm {U}}^-(x_1,x_2):=(\rho^-,u_1^-,u_{2}^-,P^-)(x_1,x_2)  \quad {\rm{in}}\quad  \Omega^-.\\
  \end{cases}
 \end{equation}
  Along the contact discontinuity $ x_2=g_{cd}(x_1) $, the following Rankine-Hugoniot conditions hold:
\begin{equation}\label{2-9}
\frac{u_2^+}{u_1^+}=\frac{u_2^-}{u_1^-}=g_{cd}^\prime(x_1), \quad P^+=P^-, \quad {\rm{on}}\quad \Gamma.
\end{equation}
On the nozzle walls $ \Gamma_w^+$  and $ \Gamma_w^- $, the usual slip boundary condition is imposed:
\begin{equation}\label{2-10}
  \frac{u_2^+}{u_1^+}=(g^+)^\prime(x_1), \ {\rm{on}}\ \Gamma_w^+, \quad
  \frac{u_2^-}{u_1^-}=(g^-)^\prime(x_1), \ {\rm{on}}\ \Gamma_w^-.
\end{equation}

 \par In summary, we will investigate the following problem:
 \begin{problem}
  Given  functions $ ( A_{en}^\pm, B_{en}^\pm,J_{en}^\pm)(x_2)$ at the entrance and function $ \omega_{ex}(x_2)$ at the exit, find a unique piecewise smooth subsonic solution $ (\bm{U}^+ ,\bm{U}^-) $  defined on $ \Omega^+ $ and $ \Omega^- $ respectively,  with the contact discontinuity $ \Gamma: x_2=g_{cd}(x_1) $ satisfying the Euler system \eqref{1-1} in the sense of weak solution and  the Rankine-Hugoniot conditions in \eqref{2-9} and the slip boundary conditions in \eqref{2-10}.
  \end{problem}
  \par
The main theorem of this paper can be stated as follows.
   \begin{theorem}
    Let $  \bm U_{0}^\pm=(A_b^\pm,B_b^\pm,J_b^\pm)$, where
\begin{equation*}
  A_b^\pm=\frac{P_b}{(\rho_b^\pm)^\gamma}, \quad B_b^\pm=\frac12|u_b^\pm|^2+\frac{\gamma P_b}{(\gamma-1)\rho_b^\pm}, \quad J_b^\pm=\rho_b^\pm u_b^\pm.
\end{equation*}
Then given functions $( A_{en}^\pm, B_{en}^\pm,J_{en}^\pm,\omega_{ex}) $, we
define
 \begin{equation}\label{2-11}
   \begin{aligned}
   \sigma(\bm U_{en}^+,\bm U_{en}^-,\omega_{ex},g^+,g^-):&=
   \|\bm U_{en}^+ -\bm U_0^+\|_{1,\alpha;[0,1]}+\|\bm U_{en}^- -\bm  U_0^-\|_{1,\alpha;[-1,0]} \\
   &\quad+\| \omega_{ex}\|_{2,\alpha;[g^-(L),g^+(L)]}+
   \| g^+ - 1\|_{1,\alpha;[0,L]}\\
   &\quad+
   \| g^- + 1\|_{1,\alpha;[0,L]}.
   \end{aligned}
   \end{equation}
  There exist positive constants $\sigma_{cd} $ and $ \mc $ depending only on  $ (\bm{U}_b^+,\bm{U}_b^+,L,\alpha) $ such that  if
   \begin{equation}\label{2-12}
   \begin{aligned}
   \sigma(\bm U_{en}^+,\bm U_{en}^-,\omega_{ex},g^+,g^-)\leq \sigma_{cd},
   \end{aligned}
   \end{equation}
  $ \mathbf{Problem \ 2.1}  $ has a unique piecewise smooth subsonic flow  $ (\bm{U}^+,\bm{U}^-) $   with the contact discontinuity $ \Gamma: x_2=g_{cd}(x_1) $ satisfying the following properties.
 \begin{enumerate}[\rm(i)]
\item The piecewise smooth subsonic flow  $ (\bm{U}^+,\bm{U}^-)\in ( C^{1,\alpha}(\Omega^+)\cap C^{\alpha}(\overline{\Omega^+}))\times (C^{1,\alpha}(\Omega^-)\cap C^{\alpha}(\overline{\Omega^-}))$  satisfies the following estimate:
 \begin{equation}\label{2-13}
\|\bm{U}^+ -\bm{U}_b^+\|_{C^{\alpha}(\overline{\Omega^+})}+\|\bm{U}^- -\bm{U}_b^-\|_{C^{\alpha}(\overline{\Omega^-})}\leq \mc\sigma(\bm U_{en}^+,\bm U_{en}^-,\omega_{ex},g^+,g^-).
\end{equation}
\item The contact discontinuity curve $ g_{cd}(x_1)\in C^{1,\alpha}([0,L])  $ satisfies
    $g_{cd}(0)=0 $. Furthermore, it holds that
\begin{equation}\label{2-14}
 \|g_{cd} \|_{1,\alpha;[0,L]}\leq \mc\sigma(\bm U_{en}^+,\bm U_{en}^-,\omega_{ex},g^+,g^-).
 \end{equation}
 \end{enumerate}
 \end{theorem}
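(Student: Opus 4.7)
The plan is to reformulate Problem 2.1 as a fixed-boundary problem plus an infinite-dimensional matching equation, and then to use the implicit function theorem (following \cite{CXZ22}) to locate the contact discontinuity. First, I would perform the Lagrangian transformation described in the introduction: define a stream function $\psi^\pm$ separately in each of the upper and lower regions by $\rho u_1=\p_{x_2}\psi$, $\rho u_2=-\p_{x_1}\psi$, normalized to vanish on the (unknown) contact discontinuity. Because $\bm u\cdot\bm n=0$ on both $\Gamma$ and $\Gamma_w^\pm$, streamlines never cross these three curves; in Lagrangian coordinates $(y_1,y_2)=(x_1,\psi^\pm)$ each $\Omega^\pm$ therefore maps onto a fixed rectangle with $\Gamma$ becoming $\{y_2=0\}$ and the walls becoming $\{y_2=\pm\kappa^\pm\}$ for constants $\kappa^\pm$ determined by $J_{en}^\pm$. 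In these coordinates $A^\pm$ and $B^\pm$ are pure functions of $y_2$ determined by the entrance data; the Euler system becomes a quasilinear second-order equation of divergence form for the physical vertical position $x_2^\pm(y_1,y_2)$, uniformly elliptic in the subsonic regime; and the boundary conditions become Dirichlet on the three horizontal sides, a Dirichlet condition at $y_1=0$ (inherited from $J_{en}^\pm$), and a nonlinear oblique condition at $y_1=L$ (from $\omega_{ex}$). The only equation not yet imposed is the pressure matching $P^+|_{y_2=0}=P^-|_{y_2=0}$ on $(0,L)$, which together with the identity $g_{cd}(y_1)=x_2^\pm(y_1,0)$ will determine the free boundary.

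Second, I would treat the free boundary parametrically. For each admissible $g_{cd}$ in a small neighbourhood of zero in a suitable weighted H\"older space, the two rectangles decouple and each becomes a fixed-boundary nonlinear elliptic problem with mixed Dirichlet/oblique data. Following the scheme of Section 4, I would linearize around the background state $\bm U_b^\pm$, solve the resulting constant-coefficient uniformly elliptic problem in a weighted H\"older space chosen to absorb the corner singularities at the four vertices of each rectangle---most critically at $(0,0)$ where the entrance Dirichlet condition meets the interface Dirichlet condition, and at $(L,0)$ where the exit oblique condition meets the interface---and close a contraction argument. This yields unique piecewise solutions $\bm U^\pm(\,\cdot\,;g_{cd})$ depending smoothly on the parameter $g_{cd}$ and on the data, with norm controlled linearly by $\sigma$ plus the parameter.

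Finally, I would impose pressure matching by the implicit function theorem. Define
\[
\mf(g_{cd};\bm U_{en}^+,\bm U_{en}^-,\omega_{ex},g^+,g^-):=\bigl(P^+-P^-\bigr)\bigl(\,\cdot\,,0\bigr),
\]
a map from the weighted H\"older ball of admissible free boundaries into the corresponding weighted H\"older space on $[0,L]$. By construction $\mf(0;\bm U_b^+,\bm U_b^-,0,1,-1)\equiv 0$, so the crux is to verify that the Fr\'echet derivative $\p_{g_{cd}}\mf$ at this point is an isomorphism. Differentiating the fixed-boundary solution map in the parameter and evaluating at the background produces an oblique/transport-type linear operator whose coefficients are explicit in $(\rho_b^\pm,u_b^\pm,P_b)$; inverting it in the weighted spaces is the main structural content of the argument and is where the subsonic transversality of the background genuinely enters. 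Once this is done, the implicit function theorem delivers a unique $g_{cd}\in C^{2,\alpha}((0,L))\cap C^{1,\alpha}([0,L])$ with $g_{cd}(0)=0$ satisfying \eqref{2-14}; substituting back into the fixed-boundary solution then gives the piecewise smooth $(\bm U^+,\bm U^-)$ satisfying \eqref{2-13}, and the bounds propagate linearly from $\sigma$ through each step. The main obstacle I expect is the combined weighted-corner analysis near $(0,0)$ and $(L,0)$---first to make the fixed-boundary elliptic solver work with the requisite quantitative estimates, and then to identify $\p_{g_{cd}}\mf$ as an isomorphism between exactly the weighted H\"older spaces in which the solution map is differentiable.
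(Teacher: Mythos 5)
Your proposal follows essentially the same route as the paper: Lagrangian coordinates $(y_1,y_2)=(x_1,\text{mass flux})$ to straighten the contact discontinuity and walls; a second-order quasilinear divergence-form equation for the physical vertical position (which is exactly the paper's stream function $\varphi$, since $\p_{y_1}\varphi=u_2/u_1$ and $\p_{y_2}\varphi=1/(\rho u_1)$ are the components of $\nabla_y x_2$); Dirichlet data on the entrance, interface and wall and a nonlinear oblique condition at the exit; a fixed-boundary solve for each candidate interface via linearization and contraction in weighted H\"older spaces adapted to the four corners; and finally the implicit function theorem applied to the pressure-jump functional, following \cite{CXZ22}. This is the paper's plan, including the identification of the two delicate points (corner regularity and isomorphism of the linearized matching map).

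The one place where your sketch is vaguer than the paper's argument is the isomorphism of $D_{\eta}\mq(\bm\zeta_b,0)$: you describe the linearization as an ``oblique/transport-type operator'' to be inverted, but it is really a Dirichlet-to-Neumann-type (Steklov) operator on the interface. The paper's device is to rescale $y_2$ in each half separately by $\sqrt{e_2^\pm/e_1^\pm}\,m^\pm$ and reflect, which turns both linearized half-problems into the \emph{same} Laplace problem on the \emph{same} rectangle with the \emph{same} boundary data; hence $\hat\varphi_\ast^+=\hat\varphi_\ast^-$, and the matching condition \eqref{5-28} collapses to a single Neumann-data problem \eqref{5-32} whose solvability and estimate follow from the Lemma 4.2 machinery already in place. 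That reduction is what makes the invertibility essentially free once the fixed-boundary solver exists, and you should not expect the inversion to be a separate ``transport'' argument. Aside from filling in that trick, your outline is a faithful reconstruction of the proof.
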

 \section{The reformulation of the contact discontinuity problem }\noindent
\par In this section, we first employ the Lagrangian transformation to straighten  the contact discontinuity, and  reformulate the free boundary value    problem 2.1. Then  by introducing a stream function and solving the hyperbolic
equations for the entropy and the Bernoulli's quantity, the Euler system is reduced to a
nonlinear second-order  equation for the stream function.  Finally, we state the implicit function theorem and  the main steps to solve the free boundary problem 3.1.
\subsection{Reformulation by the Lagrangian transformation}\noindent
 \par  Let $ (\bm{U}^+(x_1,x_2),\bm{U}^-(x_1,x_2), g_{cd}(x_1)) $ be a solution to  Problem 2.1.
 Define
 \begin{equation}\label{3-1}
 m^-=\int_{-1}^{0}J_{en}^-(s)\de s \quad {\rm{and}}\quad
  m^+=\int_{0}^{1}J_{en}^+(s)\de s.
  \end{equation}
   Then for any $x_1\in( 0,L)$, it follows from the conservation of mass equation in \eqref{1-1} that
 \begin{equation}\label{3-2}
 \int_{g^-(x_1)}^{g_{cd}(x_1)}\rho^- u_{1}^-(x_1,s)\de s=m^-, \quad
 \int_{g_{cd}(x_1)}^{g^+(x_1)}\rho^+ u_{1}^+(x_1,s)\de s =m^+.
 \end{equation}
 \par Let
  \begin{equation}\label{3-3}
  y_2(x_1,x_2)= \int_{g_{cd}(x_1)}^{x_2}\rho u_{1}(x_1,s)\de s.
  \end{equation}
    It is easy to verify that
  \begin{equation*}
  \frac{\p y_2}{\p x_1}=-\rho u_2, \quad  \frac{\p y_2}{\p x_2}=\rho u_1.
  \end{equation*}
  Thus define the Lagrangian transformation  as
  \begin{equation}\label{3-4}
   y_1=x_1,\quad
     y_2=y_2(x_1,x_2).
    \end{equation}
       A direct computation gives
   \begin{equation*}
   \frac{\p(y_1,y_2)}{\p(x_1,x_2)}=\left|
\begin{matrix} 1& 0 \\ -\rho u_2 & \rho u_1\end{matrix}\right|=\rho u_1.
 \end{equation*}
 So  if $ (\rho^\pm,u_1^\pm, u_2^\pm,P^\pm) $ are close to
$ (\rho_b^\pm,u_b^\pm,0,P_b) $, we have $ \rho^\pm u_1^\pm \geq \mc_b> 0 $, where $ \mc_b $ depends only on the  background solutions.  Hence the  Lagrangian transformation is invertible.
\par Under this transformation,  the domain $ \Omega $ becomes
\begin{equation*}
\mn:=\{(y_1,y_2): 0<y_1<L,\ -m^-<y_2<m^+\}.
\end{equation*}
 The  lower wall $ \Gamma_w^-$ and  upper wall $ \Gamma_w^+$ of the nozzle are  transformed  into
\begin{equation*}
\begin{aligned}
\Sigma_w^-:=\{(y_1,y_2): 0<y_1<L,\ y_2=-m^-\}, \quad
  \Sigma_w^+:=\{(y_1,y_2): 0<y_1<L,\ y_2=m^+\}.
\end{aligned}
\end{equation*}
On  the contact discontinuity curve $ \Gamma $, we have
\begin{equation*}
y_2(x_1,g_{cd}(x_1))=\int_{g_{cd}(x_1)}^{g_{cd}(x_1)}\rho u_{1}(x_1,s)\de s=0.
\end{equation*}
Hence the free boundary $\Gamma $ becomes a straight line
\begin{equation}\label{3-5}
\Sigma:=\{(y_1,y_2):0<y_1<L,\ y_2=0\}.
\end{equation}
\par Define
\begin{equation}\label{3-6}
\mn^-:=\mn\cap \{-m^-<y_2<0\}, \quad \mn^+:=\mn\cap \{0<y_2<m^+\}.
\end{equation}
The entrance and exit  of $ \mn^\pm $ are defined as
\begin{equation*}
\begin{aligned}
\Sigma_0^+:=\{(y_1,y_2):  y_1=0,\ 0<y_2<m^+\}, \quad \Sigma_0^-:=\{(y_1,y_2):  y_1=0,\ -m^-<y_2<0\},\\
\Sigma_L^+:=\{(y_1,y_2):  y_1=L,\ 0<y_2<m^+\}, \quad \Sigma_L^-:=\{(y_1,y_2):  y_1=L,\ -m^-<y_2<0\}.
\end{aligned}
\end{equation*}
 Let
\begin{equation*}
{\bm{ U}}(y_1,y_2)=
  \begin{cases}
 {\bm{U}}^+(y_1,y_2):=( \rho^+,u_1^+, u_{2}^+, P^+)(\mathbf{x}(y_1,y_2)),\quad {\rm{in}}\quad   \mn^+,\\
  {\bm{U}}^-(y_1,y_2):=( \rho^-,u_1^-,u_{2}^-, P^-)(\mathbf{x}(y_1,y_2)),  \quad {\rm{in}}\quad  \mn^-.\\
  \end{cases}
 \end{equation*}
Then the system \eqref{1-1} in the new coordinates  can be rewritten as
\begin{equation}\label{3-7}
\begin{cases}
\p_{y_1}\left(\frac{1}{\rho  u_1}\right)-\p_{y_2}\left(\frac{ u_2}{  u_1}\right)
=0,\\
\p_{y_1}  u_2+\p_{y_2}P=0,\\
\p_{y_1}  u_1+\frac{1}{\rho  u_1}(\p_{y_1}-\rho  u_2\p_{y_2}) P=0,\\
\p_{y_1}  A=0.\\
\end{cases}
\end{equation}
The background solutions in the  Lagrangian coordinates are
\begin{equation}\label{3-8}
 {\bm{U}}_b=
  \begin{cases}
  {\bm{U}}_b^{+}:=(\rho_b^{+},u_b^+,0,P_b),  \quad  {\rm{in}}\quad \mn_b^+,\\
  {\bm{U}}_b^{-}:=(\rho_b^{-},u_b^-,0,P_b),  \quad {\rm{in}}\quad \mn_b^-,\\
  \end{cases}
 \end{equation}
  where
  \begin{equation*}
\mn_b^+:=\{(y_1,y_2):  0<y_1<L,\ 0<y_2<m_b^+\}, \quad
\mn_b^-:=\{(y_1,y_2): 0<y_1<L,\ -m_b^-<y_2<0\},
\end{equation*}
and $ m_b^\pm=\rho_b^\pm u_b^\pm $. Without loss of generality, we assume that $ m^\pm=m_b^\pm $.
\par
In the new coordinates,  the boundary data \eqref{2-6} at the entrance is given by
 \begin{equation}\label{3-9}
 {\bm {U}}_{en}(y_2)=
\begin{cases}
  {\bm {U}}_{en}^+(y_2)=(\e A_{en}^+,\e B_{en}^+,\e J_{en}^+)(y_2),\quad {\rm{on}}\quad \Sigma_0^+,\\
  {\bm {U}}_{en}^-(y_2)=(\e A_{en}^-,\e B_{en}^-,\e J_{en}^-)(y_2),\quad {\rm{on}}\quad \Sigma_0^-,\\
  \end{cases}
  \end{equation}
  where
  \begin{equation*}
  (\e A_{en}^+,\e B_{en}^+,\e J_{en}^+)(y_2)=( A_{en}^+, B_{en}^+,J_{en}^+)\left(\int_{0}^{y_2}\frac{1}{J_{en}^+}(s)\de s\right),
  \end{equation*}
  and
  \begin{equation*}
   (\e A_{en}^-,\e B_{en}^-,\e J_{en}^-)(y_2)=( A_{en}^-, B_{en}^-,J_{en}^-)\left(-\int_{y_2}^{0}\frac{1}{J_{en}^-}(s)\de s\right).
\end{equation*}
  The boundary condition \eqref{2-7} at the exit become
  \begin{equation}\label{3-10}
 \omega(L,y_2)=\begin{cases}
   \omega_{ex}(x_2^+(L,y_2)),\quad {\rm{on}}\quad \Sigma_L^+,\\
   \omega_{ex}(x_2^-(L,y_2)),\quad {\rm{on}}\quad \Sigma_L^-,\\
  \end{cases}
  \end{equation}
where
  \begin{equation*}
\begin{aligned}
x_2^+(L,y_2)=g_{cd}(L)+\int_{0}^{y_2}\frac{1}{\rho^+  u_{1}^+}(L,s)\de s, \quad
x_2^-(L,y_2)=g_{cd}(L)-\int_{y_2}^{0}\frac{1}{\rho^- u_{1}^-}(L,s)\de s.
\end{aligned}
\end{equation*}
  Note that the flow angle at the exit  becomes non-local and nonlinear in the Lagrangian coordinates.
 \par
 The Rankine-Hugoniot conditions in \eqref{2-9}  can be rewritten
\begin{equation}\label{3-11}
\frac{ u_2^+(y_1,0)}{ u_1^+(y_1,0)}=\frac{ u_2^-(y_1,0)}{ u_1^-(y_1,0)}=g_{cd}^{\prime}(y_1),
\end{equation}
and
\begin{equation}\label{3-12}
 P^+(y_1,0)=P^-(y_1,0).
\end{equation}
The slip boundary conditions in \eqref{2-10}  become
\begin{equation}\label{3-13}
 \frac{u_2^+}{ u_1^+}(y_1,m^+)=(g^+)^{\prime}(y_1), \ {\rm{on}}\ \Sigma_w^+, \quad \frac{ u_2^-}{u_1^-}(y_1,-m^-)=(g^-)^{\prime}(y_1), \ {\rm{on}}\ \Sigma_w^-.
\end{equation}
\subsection{The stream function formulation }\noindent
   \par By the first equation in \eqref{3-7},
  one can introduce a stream function
 \begin{equation}\label{3-14}
 \varphi(y_1,y_2)=
  \begin{cases}
  \varphi^-(y_1,y_2),\quad {\rm{for}}\quad (y_1,y_2)\in \mn^-,\\
    \varphi^+(y_1,y_2),\quad {\rm{for}}\quad (y_1,y_2)\in  \mn^+,\\
  \end{cases}
  \end{equation}
  such that
\begin{equation}\label{3-15}
\p_{y_1}\varphi=\frac{  u_2}{ u_1}, \quad \p_{y_2}\varphi=\frac{1}{\rho u_1}, \quad \varphi(0,0)=0.
\end{equation}
That is
\begin{equation*}
 u_1=\frac{1}{ \rho\p_{y_2}\varphi}, \quad  u_2=\frac{\p_{y_1}\varphi}{  \rho\p_{y_2}\varphi}.
\end{equation*}
\par It follows from  the fourth equation in \eqref{3-7} that one has
\begin{equation}\label{3-16}
A=\e A_{en}(y_2).
\end{equation}
Furthermore,  the Bernoulli's quantity  $ B $  in the Lagrangian coordinates satisfies
\begin{equation*}
 \p_{y_1} B=0.
 \end{equation*}
 Thus the following equation holds:
\begin{equation}\label{3-17}
\frac{1}{2}\left(\frac{1}{( \rho\p_{y_2}\varphi)^2}+\frac{(\p_{y_1}\varphi)^2}
{( \rho\p_{y_2}\varphi)^2}\right)+\frac{\e A_{en}\gamma \rho^{\gamma-1}}{\gamma  - 1}= \e B_{en}(y_2).
\end{equation}
Define
\begin{equation*}
\mm(\rho,\n \varphi,\e A_{en},\e B_{en})=\frac{1+(\p_{y_1}\varphi)^2}{2(\p_{y_2}\varphi)^2}+\frac{\e A_{en}\gamma \rho^{\gamma+1}}{\gamma- 1}-\e B_{en}\rho^{2}.
\end{equation*}
 Since the flow is subsonic, we have
\begin{equation*}
\frac{\p }{\p\rho}\mm(\rho,\n \varphi,\e A_{en},\e B_{en})=\rho(c^2(\rho, \e A_{en})-u_{1}^2- u_{2}^2)>0.
\end{equation*}
   By solving \eqref{3-17},  $ \rho $  as a function of  $ (\n \varphi, \e A_{en}, \e B_{en}) $, namely, $ \rho=\rho(\n \varphi, \e A_{en}, \e B_{en}) $.  Moreover, a direct computation yields that
\begin{equation*}
\begin{aligned}
\frac{\p \rho}{\p(\p_{y_1}\varphi)}&=-\frac{\p_{y_1}\varphi}{\rho(\p_{y_2}\varphi)^2
\left(c^2(\rho, \e A_{en})-\frac{1+(\p_{y_1}\varphi)^2}{( \rho\p_{y_2}\varphi)^2}\right)},\\
\frac{\p \rho}{\p(\p_{y_2}\varphi)}&=\frac{1+(\p_{y_1}\varphi)^2}
{\rho(\p_{y_2}\varphi)^3
\left(c^2(\rho, \e A_{en})-\frac{1+(\p_{y_1}\varphi)^2}{( \rho\p_{y_2}\varphi)^2}\right)},\\
\frac{\p \rho}{\p  \e A_{en}}&=-\frac{\gamma\rho^{\gamma}}{(\gamma-1)
\left(c^2(\rho, \e A_{en})-\frac{1+(\p_{y_1}\varphi)^2}{( \rho\p_{y_2}\varphi)^2}\right)},\\
\frac{\p \rho}{\p  \e B_{en}}&=\frac{\rho}{
c^2(\rho, \e A_{en})-\frac{1+(\p_{y_1}\varphi)^2}{( \rho\p_{y_2}\varphi)^2}}.\\
\end{aligned}
\end{equation*}
 Thus the solution $ \bm  U $ can be expressed as a vector-value function of  $ \varphi, \e A_{en}, \e B_{en} $:
\begin{equation}\label{3-18}
\bm U=( \rho, u_1, u_2, P)
=\left( \rho,\frac{1}{ \rho\p_{y_2}\varphi},\frac{\p_{y_1}\varphi}{\rho \p_{y_2}\varphi}, \e A_{en} \rho^{\gamma}\right).
\end{equation}
\par Denote
\begin{equation*}
 u_2=W_1(\n \varphi, \e A_{en}, \e B_{en}), \quad  P=\e A_{en}(\rho(\n \varphi, \e A_{en}, \e B_{en}))^\gamma=W_2(\n \varphi, \e A_{en}, \e B_{en}).
\end{equation*}
 Then the Euler system \eqref{3-7} is reduced to the following   second-order nonlinear equation for $ \varphi $:
  \begin{equation}\label{3-19}
    \p_{y_1}W_1(\n \varphi^+, \e A_{en}^+, \e B_{en}^+)+
     \p_{y_2}W_2(\n \varphi^+, \e A_{en}^+, \e B_{en}^+)=0, \quad {\rm{in}} \quad \mn^+,
 \end{equation}
 and
 \begin{equation}\label{3-20}
     \p_{y_1}W_1(\n \varphi^-, \e A_{en}^-, \e B_{en}^-)+
     \p_{y_2}W_2(\n \varphi^-, \e A_{en}^-, \e B_{en}^-)=0, \quad {\rm{in}} \quad \mn^-,
 \end{equation}
 with the following boundary conditions:
 \begin{equation}\label{3-21}
   \begin{cases}
 \varphi^+(0,y_2)=\int_{0}^{y_2}\frac{1}{\e J_{en}^+}(s)\de s,\quad &{\rm{on}} \quad \Sigma_0^+,\\
\p_{y_1}\varphi^+(L,y_2)=\omega_{ex}(\varphi^+(L,y_2)), \quad  &{\rm{on}} \quad \Sigma_L^+,\\
  \varphi^+(y_1,0)=g_{cd}(y_1),  \ &{\rm{on}} \quad  \Sigma,\\
 \varphi^+(y_1,m^+)=g^+(y_1)-1+\int_{0}^{m^+}\frac{1}{\e J_{en}^+}(s)\de s, \quad &{\rm{on}} \quad \Sigma_w^+,\\
 \end{cases}
 \end{equation}
 and
  \begin{equation}\label{3-22}
 \begin{cases}
 \varphi^-(0,y_2)=-\int_{y_2}^{0}\frac{1}{\e J_{en}^-}(s)\de s,\quad &{\rm{on}} \quad \Sigma_0^-,\\
 \p_{y_1}\varphi^-(L,y_2)=\omega_{ex}(\varphi^-(L,y_2)), \quad  &{\rm{on}} \quad \Sigma_L^-,\\
  \varphi^-(y_1,0)=g_{cd}(y_1),  \ &{\rm{on}} \quad  \Sigma,\\
 \varphi^-(y_1,-m^-)=g^-(y_1)+1-\int_{-m^-}^{0}\frac{1}{\e J_{en}^-}(s)\de s, \quad &{\rm{on}} \quad \Sigma_w^-.\\
 \end{cases}
 \end{equation}
  Here the flow angular at  the exit
  becomes local and nonlinear after introducing the stream function.
   \par  On the contact discontinuity,   \eqref{3-12} becomes
    \begin{equation}\label{3-23}
   W_2(\n \varphi^+,\e A_{en}^+,\e B_{en}^+ )(y_1,0)=
    W_2(\n \varphi^-,\e A_{en}^-,\e B_{en}^- )(y_1,0),
    \quad {\rm{on}} \quad \Sigma.\\
     \end{equation}
      \par Therefore, $ \mathbf{Problem \ 2.1}  $ is reformulated as follows.
 \begin{problem}
  Given  functions $ ( A_{en}^\pm, B_{en}^\pm,J_{en}^\pm)$ at the entrance and function $ \omega_{ex}$ at the exit, find a unique piecewise smooth subsonic solution $ (\varphi^+ ,\varphi^-;g_{cd}) $ separated by the straight line $ \Sigma $ such that the following  properties satisfied:
  \begin{enumerate}[\rm(i)]
\item
  $\varphi^+ $ and $ \varphi^-  $ are $ C^2 $ solutions to nonlinear second-order  equations \eqref{3-18} in $ \mn^+ $ and \eqref{3-19}  in $ \mn^- $, respectively;
  \item The   boundary conditions \eqref{3-21}-\eqref{3-23} are   satisfied.
      \end{enumerate}
 \end{problem}
 To prove Theorem 2.2, we first introduce some weight H$\ddot{\rm{o}}$lder spaces and their norms in rectangle $ \mn^\pm $. For $ \mathbf{y}=(y_1,y_2) $ and $ \e{\mathbf{y}}=(\e y_1,\e y_2)\in \mn^+ $, set
\begin{equation*}
 \delta_{\mathbf{y}}:=\min(y_2,m^+-y_2)\quad {\rm{and}} \quad
  \delta_{\textbf{y},\tilde{\textbf{y}}}:
  =\min(\delta_{\mathbf{y}},\delta_{\tilde{\mathbf{y}}}).
  \end{equation*}
  For any positive integer $ m $, $ \alpha\in(0,1) $ and $ \kappa\in \mathbb{R} $ and  a function $ u^+ $ defined on $ \mn^+ $, we define:
  \begin{equation*}
  \begin{aligned}
  {[u^+]}_{k,0;\mn^+}^{(\kappa)}\ &:=\sup_{|\beta|=k} \delta_{\mathbf{y}}^{\max(|\beta|+\kappa,0)}|D^{\beta}u^+(\textbf{y})|, \ k=0,1,\cdots,m;\\
  [u^+]_{m,\alpha;\mn^+}^{(\kappa)}\ &:=\sup_{|\beta|=m}\delta_{\mathbf{y},\tilde{\mathbf{y}}}^{\max(m+\alpha+\kappa,0)}
  \frac{|D^{\beta}u^+(\textbf{y})-D^{\beta}u^+(\tilde{\textbf{y}})|}{|\textbf{y}-\tilde{\textbf{y}} |^{\alpha}};\\
  \|u^+\|_{m,\alpha;\mn^+}^{(\kappa)}\ &:=\sum_{k=0}^{m}[u^+]_{k,0,\mn^+}^{(\kappa)}
  +[u^+]_{m,\alpha;\mn^+}^{(\kappa)}\\
  \end{aligned}
  \end{equation*}
  with the corresponding function space defined as
  \begin{equation*}
  C_{m,\alpha}^{(\kappa)}(\mn^+)=\{u^+:\|u^+\|_{m,\alpha;\mn^+}^{(\kappa)}<\infty\}.
  \end{equation*}
 \par  Similarly,  for $ \mathbf{y}=(y_1,y_2) $ and $ \e{\mathbf{y}}=(\e y_1,\e y_2)\in \mn^- $, set
\begin{equation*}
 \delta_{\mathbf{y}}:=\min(-y_2,m^-+y_2)\quad {\rm{and}} \quad
  \delta_{\textbf{y},\tilde{\textbf{y}}}:
  =\min(\delta_{\mathbf{y}},\delta_{\tilde{\mathbf{y}}}).
  \end{equation*}
Then for a function $ u^- $ defined on $ \mn^- $, one can define $   {[u^-]}_{k,0;\mn^-}^{(\kappa)} $,  $ [u^-]_{m,\alpha;\mn^-}^{(\kappa)} $, $ \|u^-\|_{m,\alpha;\mn^-}^{(\kappa)} $ and $  C_{m,\alpha}^{(\kappa)}(\mn^-) $, respectively.

\par  Theorem 2.2  then follows directly from  the following theorem:
 \begin{theorem}
     Let  $ \varphi _b^\pm(y_2)=\frac{y_2}{m^\pm} $. Then
 there exist positive constants $\sigma_{cd}^\ast $ and $ \mc^\ast $ depending only on  $ ({\bm{U}}_b^+,{\bm{U}}_b^-,L,\alpha) $ such that  if
   \begin{equation}\label{3-24}
   \begin{aligned}
  \sigma(\bm U_{en}^+,\bm U_{en}^-,\omega_{ex},g^+,g^-)\leq  \sigma_{cd}^\ast ,
   \end{aligned}
   \end{equation}
   $ \mathbf{Problem \ 3.1}  $ has a unique piecewise smooth subsonic solution $ (\varphi^+,\varphi^-;g_{cd}(y_1))$ satisfying properties $ \rm(i) $ and $ \rm(ii) $.  Furthermore, the following estimate holds:
    \begin{equation}\label{3-27}
  \|\varphi^+ -\varphi_b^+\|_{2,\alpha;\mn^+}^{(-1-\alpha)}
    +\|\varphi^- -\varphi_b^-\|_{2,\alpha;\mn^-}^{(-1-\alpha)}+ \|g_{cd} \|_{1,\alpha;[0,L]}
   \leq\mc^\ast \sigma(\bm U_{en}^+,\bm U_{en}^-,\omega_{ex},g^+,g^-).
   \end{equation}
 \end{theorem}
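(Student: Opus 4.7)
The plan is to decouple the free-boundary problem by treating the contact discontinuity $g_{cd}$ as a parameter, solving two independent nonlinear elliptic boundary value problems for $\varphi^\pm$ in the fixed rectangles $\mn^\pm$, and then using the implicit function theorem to pick the unique $g_{cd}$ that restores the pressure-matching condition \eqref{3-23} on $\Sigma$. Note that after the Lagrangian transformation the only genuinely coupled and nonlocal ingredient that remains is \eqref{3-23}; the conditions \eqref{3-11}, which seemed to couple the two halves, have been absorbed into the common Dirichlet datum $\varphi^\pm(y_1,0)=g_{cd}(y_1)$.

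Step 1: For a given admissible perturbation $g_{cd}$ (taken in a small ball of $C_{2,\alpha}^{(-1-\alpha)}(\Sigma)$ with $g_{cd}(0)=0$), solve \eqref{3-19} in $\mn^+$ with the Dirichlet datum $\varphi^+|_\Sigma=g_{cd}$, the Dirichlet data \eqref{3-21} on $\Sigma_0^+$ and $\Sigma_w^+$, and the nonlinear oblique-type condition $\p_{y_1}\varphi^+(L,y_2)=\omega_{ex}(\varphi^+(L,y_2))$ on $\Sigma_L^+$; do the analogous thing in $\mn^-$. I would linearize around $\varphi_b^\pm(y_2)=y_2/m^\pm$: since the flow is subsonic, $\frac{\p \mm}{\p\rho}>0$ yields a uniformly elliptic linear operator with $C^{\alpha}$ coefficients, while the linearized exit condition is of the form $\p_{y_1}\psi^\pm(L,y_2)=\omega_{ex}(y_2/m^\pm)+\text{l.o.t.}$, which is a standard oblique derivative boundary condition. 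Existence and uniqueness for the linear problem in the weighted Hölder space $C_{2,\alpha}^{(-1-\alpha)}(\mn^\pm)$ follows from Lieberman-type corner estimates (the weight $\kappa=-1-\alpha$ precisely accommodates the possible singularities at the four corners of $\mn^\pm$ where Dirichlet data meets either an oblique condition or where the entrance data meets $\Sigma$). Then a contraction mapping argument handles the nonlinear problem, producing a solution operator $g_{cd}\mapsto \varphi^\pm[g_{cd}]$ that is continuously differentiable into $C_{2,\alpha}^{(-1-\alpha)}(\mn^\pm)$ with the estimate
\begin{equation*}
\|\varphi^\pm[g_{cd}]-\varphi_b^\pm\|_{2,\alpha;\mn^\pm}^{(-1-\alpha)}\leq C\bigl(\sigma+\|g_{cd}\|_{2,\alpha;\Sigma}^{(-1-\alpha)}\bigr).
\end{equation*}

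Step 2: Define the pressure-jump map
\begin{equation*}
\mf(g_{cd};\bm U_{en}^\pm,\omega_{ex},g^\pm)(y_1):=W_2(\nabla\varphi^+[g_{cd}],\tilde A_{en}^+,\tilde B_{en}^+)(y_1,0)-W_2(\nabla\varphi^-[g_{cd}],\tilde A_{en}^-,\tilde B_{en}^-)(y_1,0),
\end{equation*}
viewed as a map from a neighborhood of $0$ in $C_{2,\alpha}^{(-1-\alpha)}(\Sigma)\times(\text{data space})$ into a trace space $C_{1,\alpha}^{(-\alpha)}(\Sigma)$ (or whatever the natural target space dictated by the regularity in Step 1 turns out to be). One checks $\mf(0;0)=0$ (the background pressures agree) and, using the formulas for $\p\rho/\p(\p_i\varphi)$, computes the Fréchet derivative $D_{g_{cd}}\mf(0;0)$. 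This derivative is essentially a difference of Dirichlet-to-Neumann operators: for a Dirichlet perturbation $h$ on $\Sigma$, let $\psi^\pm[h]$ solve the linearized Euler equation on $\mn^\pm$ with zero data on the other three sides and trace $h$ on $\Sigma$; then
\begin{equation*}
D_{g_{cd}}\mf(0;0)h=\alpha^+\p_{y_2}\psi^+[h]|_\Sigma-\alpha^-\p_{y_2}\psi^-[h]|_\Sigma
\end{equation*}
for constants $\alpha^\pm$ built from the background. Sign information from the background (the two flows are genuinely subsonic with different speeds/densities allowed) must be used to show this composite operator is a bijective isomorphism between the weighted spaces, with positive symbol. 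Then the implicit function theorem produces a unique $g_{cd}=g_{cd}(\bm U_{en}^\pm,\omega_{ex},g^\pm)$ with $g_{cd}(0)=0$ solving $\mf=0$ and enjoying the bound $\|g_{cd}\|_{2,\alpha;\Sigma}^{(-1-\alpha)}\leq C\sigma$.

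Step 3: Combining Step 1 evaluated at this $g_{cd}$ with the bound from Step 2 yields the full estimate \eqref{3-27}; uniqueness of the triple follows from uniqueness of the solutions in each step. Theorem 2.2 then follows by returning to Eulerian coordinates through the inverse Lagrangian transformation, translating the weighted estimates into $C^\alpha$ estimates. I expect the hard step to be Step 2: first, verifying that the Fréchet derivative $D_{g_{cd}}\mf(0;0)$ is actually invertible on the weighted Hölder scale requires a careful Dirichlet-to-Neumann analysis of the two linearized problems in rectangles with mixed Dirichlet/oblique data, and second, keeping track of the corner regularity at the four points $(0,0)$, $(L,0)$, $(0,\pm m^\pm)$, $(L,\pm m^\pm)$ so that the image space and domain space of $\mf$ are compatible with the regularity delivered by the weighted elliptic estimates. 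The nonlocal and nonlinear nature of the exit condition $\p_{y_1}\varphi=\omega_{ex}(\varphi)$ adds a further subtle contribution to both the linear operator and its derivative that must be controlled uniformly as $g_{cd}$ varies.
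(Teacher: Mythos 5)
Your proposal matches the paper's proof in both architecture and key techniques: freeze $g_{cd}$, solve the two decoupled nonlinear mixed boundary value problems in $\mn^\pm$ via linearization plus contraction mapping in the weighted H\"older spaces $C^{(-1-\alpha)}_{2,\alpha}(\mn^\pm)$, and then apply the implicit function theorem to the pressure-jump functional on $\Sigma$, whose linearization at the background is a difference of Dirichlet-to-Neumann maps. The only point where the paper is more explicit than your sketch is the invertibility of $D_{g_{cd}}\mf(0;0)$: instead of a general ``positive symbol'' argument, the paper rescales $\hat y_2 = y_2/m^\pm$ and reflects $\mn^-$ into $\mn^+$ so that both linearized problems become the same Laplace equation on a single rectangle, reducing the inversion to one Neumann boundary value problem.
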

 \subsection{Solving the free boundary problem 3.1}\noindent
\par    Inspired by  \cite{CXZ22}, we  use the implicit function theorem to solve this free boundary problem. To this end, we first state the implicit function theorem.
\begin{theorem}
   (Implicit function theorem \cite[Theorem 4.B]{ZE92}). Let $ X $, $Y$, $Z $ be Banach spaces and let $ U(x_0,y_0) $ be an open neighborhood of $ (x_0,y_0) $ in $ X\times Y $. Suppose that the mapping $ F:U(x_0,y_0)\rightarrow Z $ satisfies the following conditions:
   \begin{enumerate}[\rm(i)]
\item $ F(x_0,y_0)=0 $;
  \item The partial Fr\'{e}chet derivative $ D_yF $ exists on $ U(x_0,y_0) $ and $ D_yF(x_0,y_0)$: $Y\rightarrow Z  $ is bijective;
  \item $ F $ and $ D_yF $ are continuous at $ (x_0,y_0) $.
      \end{enumerate}
      Then the following hold:
    \begin{enumerate}[\rm(1)]
 \item Existence and uniqueness. There exist positive numbers $ r_0 $ and $ r $ such that,
for every $ x \in X $ satisfying $ \| x - x_0 \| < r_0 $,  $F(x,y) = 0  $ has a unique solution  $ y(x) \in Y $  with
 $ \|y(x) - y_0 \|<r $.
 \item  Continuity. If $ F $ is continuous in a neighborhood of $(x_0, y_0) $, then $ y(\cdot) $ is
continuous in a neighborhood of $ x_0 $.
\item Continuous differentiability. If $ F $ is a $ C^m $-map, $ 1 \leq m < \infty $, on a neighborhood of $ (x_0, y_0) $, then $ y(\cdot) $ is also a $ C^m $-map on a neighborhood of $x_0 $.
     \end{enumerate}
     \end{theorem}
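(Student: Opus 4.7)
The approach is the standard reduction of the implicit equation $F(x,y)=0$ to a parameter-dependent fixed-point problem, solved by Banach's contraction principle. First I would apply the bounded inverse theorem (a consequence of the open mapping theorem): since $L:=D_yF(x_0,y_0)\in\mathcal{L}(Y,Z)$ is a continuous linear bijection between Banach spaces, it admits a bounded linear inverse $L^{-1}:Z\to Y$. This step is essentially automatic but is the one place Banach completeness enters.

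Next I would introduce the auxiliary map
\[
T(x,y):=y-L^{-1}F(x,y),\qquad (x,y)\in U(x_0,y_0),
\]
so that $F(x,y)=0$ is equivalent to $T(x,y)=y$. Its partial derivative is $D_yT(x,y)=I_Y-L^{-1}D_yF(x,y)$, vanishing at $(x_0,y_0)$. By the continuity of $D_yF$ at $(x_0,y_0)$, I can select $r>0$ and $r_0>0$ so that $\|I_Y-L^{-1}D_yF(x,y)\|_{\mathcal{L}(Y)}\le\tfrac12$ whenever $\|x-x_0\|<r_0$ and $\|y-y_0\|\le r$. The Hahn--Banach form of the mean value inequality then gives
\[
\|T(x,y_1)-T(x,y_2)\|\le\tfrac12\|y_1-y_2\|\quad\text{for all }y_1,y_2\in\overline{B_r(y_0)}.
\]
Shrinking $r_0$ further and using $F(x_0,y_0)=0$ together with continuity of $F$ at $(x_0,y_0)$, I can also ensure $\|T(x,y_0)-y_0\|=\|L^{-1}F(x,y_0)\|\le r/2$, so $T(x,\cdot)$ maps the closed ball $\overline{B_r(y_0)}$ into itself. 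Banach's fixed-point theorem produces, for each such $x$, a unique $y(x)\in\overline{B_r(y_0)}$ with $F(x,y(x))=0$, establishing claim (1).

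For claim (2), applying the contraction estimate to the two identities $y(x_i)=T(x_i,y(x_i))$ yields
\[
\|y(x_1)-y(x_2)\|\le\tfrac12\|y(x_1)-y(x_2)\|+\|L^{-1}\|\,\|F(x_1,y(x_2))-F(x_2,y(x_2))\|,
\]
so neighborhood continuity of $F$ in the $x$-variable transfers directly to continuity of $y(\cdot)$. For claim (3) I would argue by induction on $m$. First, $D_yF(x,y(x))$ is invertible for $x$ near $x_0$, because $L^{-1}D_yF(x,y(x))$ is a small perturbation of $I_Y$ and the Neumann series converges. A first-order Taylor expansion of the identity $F(x,y(x))\equiv 0$ suggests the candidate derivative
\[
Dy(x)=-[D_yF(x,y(x))]^{-1}D_xF(x,y(x)),
\]
and the contraction estimate upgrades the residual in the Taylor expansion to a genuine $o(\|h\|)$ bound, proving Fréchet differentiability. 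Higher regularity is then obtained by differentiating $F(x,y(x))\equiv 0$ repeatedly, using that inversion $A\mapsto A^{-1}$ is $C^\infty$ on the open set of isomorphisms in $\mathcal{L}(Y,Z)$, and iterating.

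The principal obstacle is the smoothness step (3): passing from the qualitative contraction estimate to an explicit linear approximation takes some care, and the inductive step from $C^k$ to $C^{k+1}$ requires the chain rule for Fréchet derivatives together with smoothness of operator inversion, with every composite controlled in operator norm uniformly on a shrinking neighborhood of $x_0$. Steps (1) and (2) are by contrast fairly mechanical once the fixed-point reformulation is set up.
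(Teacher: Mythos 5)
This statement is quoted verbatim from Zeidler \cite[Theorem 4.B]{ZE92} and the paper gives no proof of its own. Your argument is the standard contraction-mapping proof (bounded inverse theorem, the auxiliary map $T(x,y)=y-L^{-1}F(x,y)$, mean value inequality to get a uniform contraction, Banach fixed point, then Neumann series and smoothness of operator inversion for the regularity claims), which is correct and is precisely the approach taken in the cited reference.
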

     \par Then we follow the steps below to solve Problem 3.1:
     \begin{enumerate}[ \bf (a)]
 \item
 Given any function $ g_{cd}(y_1)=\int_{0}^{y_1}\eta(s)\de s $ belonging to some suitable function classes, we  will solve the nonlinear second-order equation \eqref{3-19}(\eqref{3-20}) with nonlinear and mixed boundary condition \eqref{3-21}(\eqref{3-22}) in $ \mn^+ $( in $ \mn^- $). This will be achieved by constructing suitable barrier functions and employing Schauder estimates for second-order elliptic equation. The detailed analysis will be given in Section 4.
 \item Define the map $  \mq(\bm \zeta_0,\eta):=(W_2(\n \varphi^+,\e A_{en}^+,\e B_{en}^+ )-  W_2(\n \varphi^-,\e A_{en}^-,\e B_{en}^- ))(y_1,0) $. To employ the implicit function theorem, we need to  compute   the Fr\'{e}chet derivative $ D_\eta\mq(\bm \zeta_0,\eta)$ of the functional $ \mq(\bm \zeta_0,\eta)$ with respect to $ \eta $ and show that $D_\eta\mq(\bm\zeta_b,0) $ is an  isomorphism. This step will be achieved in  Section 5.
           \end{enumerate}
 \section{The  solution to a fixed boundary value problem in $\mn $}\noindent
\par In this section,  given any function $ g_{cd}(y_1)=\int_{0}^{y_1}\eta(s)\de s \in C^{1,\alpha}([0,L]) $,
 we  will solve the nonlinear second-order equation \eqref{3-19}(\eqref{3-20}) with nonlinear and mixed boundary condition \eqref{3-21}(\eqref{3-22}) in $ \mn^+ $( in $ \mn^- $). For convenience, we will focus on $ \mn^+ $.
 \subsection{Linearization}\noindent
\par To solve nonlinear equation  \eqref{3-19} in the  domain $ \mn^+ $, we first linearize \eqref{3-19} and then solve
the linear equation in the  domain $ \mn^+ $.
\par Let $ \phi^+=\varphi^+-\varphi_b^+ $. Note that
 \begin{equation}\label{4-1}
\p_{y_1}W_1(\n \varphi_b^+, A_b^+, B_b^+)+
     \p_{y_2}W_2(\n \varphi_b^+,  A_b^+,  B_b^+)=0, \quad {\rm{in}} \quad \mn^+.
     \end{equation}
     Taking the difference of \eqref{3-19} and \eqref{4-1}, one has
     \begin{equation}\label{4-2}
\sum_{i,j=1,2}\p_{y_i}(a_{ij}(\n \phi^+)\p_{y_j}\phi^+)=\sum_{i=1,2}\p_{y_i}f_i^+, \quad {\rm{in}} \quad \mn^+,
     \end{equation}
     where
     \begin{equation*}
     \begin{aligned}
     a_{ij}(\n \phi^+)=\int_0^1 \p_{ \varphi^+_{y_j}}
     W_i(\n\varphi_b^++s  \n \phi^+, \e A_{en}^+, \e B_{en}^+)\de s, \quad i,j=1,2,
     \end{aligned}
     \end{equation*}
     and
     \begin{equation*}
     \begin{aligned}
     f_i^+=W_i(\n\varphi_b^+, A_b^+,  B_b^+)-W_i(\n\varphi_b^+, \e A_{en}^+, \e B_{en}^+) , \quad i=1,2.
     \end{aligned}
     \end{equation*}
     \par   Define an iteration set
\begin{equation}\label{4-3}
\mj_{upp}(\delta_1)=\{\phi^+:\phi^+(0,0)=0, \|\phi^+\|_{2,\alpha;\mn^+}^{(-1-\alpha)}
\leq
\delta_1\},
\end{equation}
where $ \delta_1 $ is a positive constant to be determined later. Then for given $ \bar\varphi^+ $ such that $ \bar\phi^+=\bar\varphi^+-\varphi^+_b\in \mj_{upp}(\delta_1) $, find $ \varphi^+=\varphi^+_b+  \phi^+ $ by  solving the  linear equation:
 \begin{equation}\label{4-4}
 \begin{cases}
\sum_{i,j=1,2}\p_{y_i}(a_{ij}(\n \bar\phi^+)\p_{y_j}\phi^+)=\sum_{i=1,2}\p_{y_i}f_i^+, &\quad {\rm{in}} \quad \mn^+,\\
    \phi^+(0,y_2)=g_{1}^+(y_2), &\quad {\rm{on}} \quad
  \Sigma_0^+,\\
 \phi^+(y_1,0)=g_{cd}(y_1), &\quad {\rm{on}} \quad
  \Sigma,\\
  \phi^+(y_1,m^+)=g_{2}^+(y_1), &\quad {\rm{on}} \quad
  \Sigma_w^+,\\
  \p_{y_1}\phi^+(L,y_2)=\e\omega_{ex}^+(y_2), &\quad {\rm{on}} \quad
  \Sigma_L^+,\\
  \end{cases}
  \end{equation}
  where
  \begin{equation*}
     \begin{aligned}
    g_{1}^+(y_2)&=\int_0^{y_2}\left(\frac{1}{\e J_{en}^+(s)}-\frac{1}{ J_b^+}\right)\de s,\\
    g_{2}^+(y_1)&=g^+(y_1)-1+\int_0^{m^+}\left(\frac{1}{\e J_{en}^+(s)}-\frac{1}{ J_b^+}\right)\de s,\\
    \e\omega_{ex}^+(y_2)&= \omega_{ex}(\varphi_b^+(y_2)+\bar\phi^+(L,y_2)).
    \end{aligned}
    \end{equation*}
    \par Use the abbreviation
    \begin{equation*}
    \sigma_v=\sigma(\bm U_{en}^+,\bm U_{en}^-,\omega_{ex},g^+,g^-),
    \end{equation*}
    where $\sigma(\bm U_{en}^+,\bm U_{en}^-,\omega_{ex},g^+,g^-)$  is  defined in \eqref{2-11}.
 Then a direct computation yields that
\begin{equation}\label{4-6}
\begin{cases}
\begin{aligned}
&\sum_{i=1}^2\| f_i^+\|_{1,\alpha;\mn^+}^{(-\alpha)}
\leq\mc^+ \sigma_v,\\
  &\|g_{1}^+\|_{2,\alpha;\Sigma_0^+}^{(-1-\alpha)}
\leq \mc^+ \|\e J_{en}^+ -J_b^+\|_{1,\alpha;\Sigma_0^+}^{(-\alpha)}
\leq\mc^+\sigma_v,\\
&\|g_{2}^+\|_{1,\alpha;
[0,L]}
\leq \mc^+\left(
\|g^+(y_1)-1\|_{1,\alpha;
[0,L]}+\|\e J_{en}^+ -J_b^+\|_{1,\alpha;\Sigma_0^+}^{(-\alpha)}\right)\leq\mc^+\sigma_v,\\
&\|\e\omega_{ex}^+\|_{1,\alpha;\overline{\Sigma_L^+}}\leq \mc^+(\sigma_v\delta_1+ \sigma_v),
\end{aligned}
\end{cases}
  \end{equation}
  where $ \mc^+>0 $  depends only on $ \bm U_b^+ $, $L$ and $ \alpha $.
   \subsection{Solving the linear boundary value problem  }\noindent
  \par In this subsection, we consider the following linear boundary value problem:
  \begin{equation}\label{4-7}
  \begin{cases}
\sum_{i,j=1,2}\p_{y_i}(a_{ij}(\n \bar\phi^+)\p_{y_j}\phi^+)=\sum_{i=1}^2\p_{y_i}f_i^+, &\quad {\rm{in}} \quad \mn^+,\\
\phi^+(0,y_2)=g_{1}^+(y_2), &\quad {\rm{on}} \quad
  \Sigma_0^+,\\
 \phi^+(y_1,0)=g_{cd}(y_1), &\quad {\rm{on}} \quad
  \Sigma,\\
  \phi^+(y_1,m^+)=g_{2}^+(y_1), &\quad {\rm{on}} \quad
  \Sigma_w^+,\\
  \p_{y_1}\phi^+(L,y_2)=\e\omega_{ex}^+(y_2), &\quad {\rm{on}} \quad
  \Sigma_L^+.\\
  \end{cases}
      \end{equation}
      For the coefficients of \eqref{4-7}, it is easy to see that
\begin{equation}\label{4-8}
\|a_{ij}-e_i^+\delta_{ij}\|_{1,\alpha;\mn^+}^{(-\alpha)}\leq \kappa^+,
\end{equation}
  where $\kappa^+>0$  depends on $ \delta_1 $,  $ \bm U_b^+ $, $ L $ and $ \alpha $. Note that
   \begin{equation}\label{4-9}
 e_{1}^+=a_{11}(0 )=u_b^+>0,\quad
 e_{2}^+=a_{22}(0 )=\frac{  (c_b^+)^2(u_b^+)^3(\rho_b^+)^2}
{(c_b^+)^2-(u_b^+)^2}>0,
\end{equation}
and  $ a_{12}(0)=a_{21}(0)=0 $. Hence one has
\begin{equation*}
a_{11}(0 )a_{22}(0)-(a_{12}(0))^2= \frac{(c_b^+)^2(\rho_b^+)^2(u_b^+)^4}
   {(c_b^+)^2-(u_b^+)^2}>0.
   \end{equation*}
    If  $\kappa^+  $ is  sufficiently
small, there exists a constant $ \lambda^+>0 $ depending only on $ {\bm U}_b^+ $  such that for any $ \bm \xi \in \mathbb{R}^2 $, it holds that
\begin{equation}\label{4-10}
 \lambda^+ |\bm \xi|^2\leq a_{ij}\xi_i\xi_j\leq \frac{1}{\lambda^+}|\bm \xi|^2.
 \end{equation}
 \par Then for the   problem \eqref{4-7}, we have the following conclusion:
   \begin{lemma}
   For given $  f_i^+\in C_{1,\alpha}^{(-\alpha)}(\mn^+) $ $(i=1,2) $, $  g_1^+\in C_{2,\alpha}^{(-1-\alpha)}(\Sigma_0^+) $, $ g_{cd}\in C^{1,\alpha}([0,L]) $, $ g_{2}\in C^{1,\alpha}([0,L]) $   and $\e\omega_{ex}^+\in  C^{1,\alpha}(\overline{\Sigma_L^+}) $,   the boundary value problem \eqref{4-7} has  a unique solution $ \phi^+ \in C_{2,\alpha}^{(-1-\alpha)}(\mn^+) $ satisfying
\begin{equation}\label{4-11}
\begin{aligned}
\|\phi^+\|_{2,\alpha;\mn^+}^{(-1-\alpha)}
\leq \mc^+\bigg(\sum_{i=1}^2\| f_i^+\|_{1,\alpha;\mn^+}^{(-\alpha)}+\|g_{1}^+\|_{2,\alpha;\Sigma_0^+}^{(-1-\alpha)}
+\|g_{cd}\|_{1,\alpha;
[0,L]}
+\|g_{2}^+\|_{1,\alpha;
[0,L]}
+\|\e\omega_{ex}^+\|_{1,\alpha;\overline{\Sigma_L^+}}\bigg),\\
\end{aligned}
\end{equation}
where the constant $ \mc^+>0 $ depends on  $ \bm U_b^+ $, $L$ and $ \alpha $.
  \end{lemma}
 \par In order to prove Lemma 4.1, we first  consider an auxiliary problem:
  \begin{equation}\label{4-12}
  \begin{cases}
e_1^+\p_{y_1}^2\phi^++e_2^+\p_{y_2}^2\phi^+
=\sum_{i=1}^2\p_{y_i}h_i^+, &\quad {\rm{in}} \quad \mn^+,\\
\phi^+(0,y_2)=g_{1}^+(y_2), &\quad {\rm{on}} \quad
  \Sigma_0^+,\\
 \phi^+(y_1,0)=g_{cd}(y_1), &\quad {\rm{on}} \quad
  \Sigma,\\
 \phi^+(y_1,m^+)=g_{2}^+(y_1), &\quad {\rm{on}} \quad
  \Sigma_w^+,\\
  \p_{y_1}\phi^+(L,y_2)=\e\omega_{ex}^+(y_2), &\quad {\rm{on}} \quad
  \Sigma_L^+.\\
  \end{cases}
      \end{equation}
      \begin{lemma}
  For given $  h_i\in C_{1,\alpha}^{(-\alpha)}(\mn^+) $ $(i=1,2) $, $  g_1^+\in C_{2,\alpha}^{(-1-\alpha)}(\Sigma_0^+) $, $ g_{cd}\in C^{1,\alpha}([0,L]) $, $ g_{2}\in C^{1,\alpha}([0,L]) $   and $\e\omega_{ex}^+\in  C^{1,\alpha}(\overline{\Sigma_L^+}) $,
   the boundary value problem \eqref{4-12} has a unique solution $ \phi^+ \in C_{2,\alpha}^{(-1-\alpha)}(\mn^+) $ such that
\begin{equation}\label{4-13}
\begin{aligned}
\|\phi^+\|_{2,\alpha;\mn^+}^{(-1-\alpha)}
\leq \mc^+\bigg(\sum_{i=1}^2\| h_i^+\|_{1,\alpha;\mn^+}^{(-\alpha)}+\|g_{1}^+\|_{2,\alpha;\Sigma_0^+}^{(-1-\alpha)}
+\|g_{cd}\|_{1,\alpha;
[0,L]}
+\|g_{2}^+\|_{1,\alpha;
[0,L]}
+\|\e\omega_{ex}^+\|_{1,\alpha;\overline{\Sigma_L^+}}\bigg),
\end{aligned}
\end{equation}
where $ \mc^+>0 $ depends on  $ \bm U_b^+ $, $L$ and $ \alpha $.
\end{lemma}
\begin{proof}
By \cite[Theorem 1]{LG86}, the boundary value problem \eqref{4-12} has a unique solution
\begin{equation*}
\phi^+\in C^{2,\alpha}(\mn^+)\cap C^0(\overline{\mn^+}).
\end{equation*}
 Next, we obtain the  estimate of $\phi^+ $  in the weighted norm $ C_{2,\alpha;\mn^+}^{(-1-\alpha)} $. By employing the transformation $ (\tilde y_1,\tilde y_2)=\left(\frac{y_1}{\sqrt{e_1^+}},\frac{y_2}{\sqrt{e_2^+}}\right)$, the operator $ e_1^+\p_{y_1}^2+e_2^+\p_{y_2}^2 $ is transformed into the Laplace operator. Thus, for convenience, below we will replace \eqref{4-12}  by
  \begin{equation}\label{4-14}
  \begin{cases}
\Delta \phi^+
=\sum_{i=1}^2\p_{y_i}h_i^+, &\quad {\rm{in}} \quad \mn^+,\\
\phi^+(0,y_2)=g_{1}^+(y_2), &\quad {\rm{on}} \quad
  \Sigma_0^+,\\
 \phi^+(y_1,0)=g_{cd}(y_1), &\quad {\rm{on}} \quad
  \Sigma,\\
 \phi^+(y_1,m^+)=g_{2}^+(y_1), &\quad {\rm{on}} \quad
  \Sigma_w^+,\\
  \p_{y_1}\phi^+(L,y_2)=\e\omega_{ex}^+(y_2), &\quad {\rm{on}} \quad
  \Sigma_L^+.\\
  \end{cases}
      \end{equation}
 We will divide into the following five steps   to derive the estimate of the solution $ \phi^+$ for the boundary value problem \eqref{4-14}.
\par  { \bf Step 1. Homogenize the boundary data.}
\par  We first seek a function $ \phi^+_1\in  C_{2,\alpha}^{(-1-\alpha)}(\mn^+) $ satisfying $ \p_{y_1}\phi^+_1(L,y_2)=\e\omega_{ex}^+(y_2) $. It follows from Lemma 6.38 in \cite{GT98} that $ \e\omega_{ex}^+$ can be extended outside $ \Sigma_{L}^+ $ so that $ \e\omega_{ex}^+\in C_0^{1,\alpha}(\mathbb{R}) $. Let $ \xi(t) $ be a smooth mollifier satisfying $ \xi(t)\geq 0 $ and  $ \int_{-\infty}^{\infty}\xi(t)\de t=1 $. We define
\begin{equation*}
  \phi^+_1(y_1,y_2)=(y_1-L)\int_{-\infty}^{\infty}
 \e\omega_{ex}^+(y_2-(y_1-L)t)\xi(t)\de t.
  \end{equation*}
  Then one has $ \p_{y_1}\phi_1^+(L,y_2)=\e\omega_{ex}^+(y_2) $ and
  \begin{equation}\label{4-15}
  \|\phi^+_1\|_{2,\alpha;\mn^+}^{(-1-\alpha)}\leq C\|\e\omega_{ex}^+\|_{1,\alpha;\overline{\Sigma_L^+}}.
  \end{equation}
  Set
  \begin{equation*}
   \phi^+_2(y_1,y_2)=\phi^+(y_1,y_2)-(\phi^+_1(y_1,y_2)-\phi^+_1(0,y_2))-g_{1}^+(y_2).
   \end{equation*}
   Then $ \phi^+_2$ satisfies
  \begin{equation}\label{4-16}
  \begin{cases}
\Delta \phi^+_2
=\sum_{i=1}^2\p_{y_i}h_{i+2}^+, &\quad {\rm{in}} \quad \mn^+,\\
\phi^+_2(0,y_2)=0, &\quad {\rm{on}} \quad
  \Sigma_0^+,\\
 \phi^+_2(y_1,0)=h_5^+(y_1), &\quad {\rm{on}} \quad
  \Sigma,\\
 \phi^+_2(y_1,m^+)=h_6^+(y_1), &\quad {\rm{on}} \quad
  \Sigma_w^+,\\
  \p_{y_1}\phi^+_2(L,y_2)=0, &\quad {\rm{on}} \quad
  \Sigma_L^+,\\
  \end{cases}
      \end{equation}
      where
   \begin{equation*}
  \begin{aligned}
  &h_3^+(y_1,y_2)=h_1^+(y_1,y_2)-\p_{y_1}\phi^+_1(y_1,y_2), \\    &h_4^+(y_1,y_2)=h_2^+(y_1,y_2)-(\p_{y_2}\phi^+_1(y_1,y_2)-
  \p_{y_2}\phi^+_1(0,y_2))-(g_1^+)^\prime(y_2),\\
  &h_5^+(y_1)=g_{cd}(y_1)-(\phi^+_1(y_1,0)-\phi^+_1(0,0))-g_1^+(0),\\
  &h_6^+(y_1)=g_{2}^+(y_1)-(\phi^+_1(y_1,m^+)-\phi^+_1(0,m^+))-g_1^+(m).
  \end{aligned}
      \end{equation*}

      \par  {\bf Step 2. The $ L^\infty  $-estimate of $\phi^+_2 $.}
\par To obtain \eqref{4-13}, we first need to the $ L^\infty  $-estimate of $\phi^+_2 $. To achieve this, define the following comparison function:
 \begin{equation*}
\begin{aligned}
v_1
=M\bigg
(1+\bigg(\frac{3L}{2}-y_1\bigg)^2+M_1(y_2^\alpha+(m^+-y_2)^\alpha)\bigg),
\end{aligned}
\end{equation*}
where
\begin{equation*}
M=\sum_{i=1}^2\|h_{i+2}\|_{1,\alpha;\mn^+}^{(-\alpha)}
+\sum_{i=5}^6\|h_{i}\|_{1,\alpha;[0,L]}, \quad {\rm{and}} \quad  M_1=\frac{4}{\alpha(1-\alpha)}(m^+)^{2-\alpha}.
\end{equation*}
\par A direct  computation yields that
\begin{equation*}
\begin{aligned}
\Delta v_1&= -MM_1\alpha(1-\alpha)
(y_2^{\alpha-2}+(m^+-y_2)^{\alpha-2})+2M\\
 &=-MM_1\alpha(1-\alpha)
(y_2^{\alpha-2}+(m^+-y_2)^{\alpha-2})
+\frac{\alpha(1-\alpha)}{2}MM_1(m^+)^{\alpha-2}\\
&\leq -MM_1\alpha(1-\alpha)
(y_2^{\alpha-2}+(m^+-y_2)^{\alpha-2})
+\frac{\alpha(1-\alpha)}{2}MM_1(y_2^{\alpha-2}+(m^+-y_2)^{\alpha-2})\\
 &\leq-\frac{1}{2m^+}MM_1\alpha(1-\alpha)
(y_2^{\alpha-1}+(m^+-y_2)^{\alpha-1}).\\
\end{aligned}
\end{equation*}
On the boundary $ \Sigma_{L}^+$, one derives that
\begin{equation*}
\p_{y_1}v_1(L,y_2)=-{ML}.
\end{equation*}
 Thus, for sufficiently large positive constant $ C_1 $ independent of $ M $,  we have
\begin{equation*}
\begin{cases}
\Delta(C_1v_1\pm \phi^+_2)\leq 0,\quad &{\rm{in}} \quad
\mn^+,\\
(C_1v_1\pm \phi^+_2)(0,y_2)>0,\quad &{\rm{on}} \quad \Sigma_0^+,\\
(C_1v_1\pm \phi^+_2)(y_1,0)>0,
 &\ {\rm{on}} \quad
\Sigma,\\
(C_1v_1\pm \phi^+_2)(y_1,m^+)>0,  &{\rm{on}} \quad
\Sigma_w^+,\\
\p_{y_1}(C_1v_1\pm \phi^+_2)(L,y_2)< 0 , \ &{\rm{on}} \quad
\Sigma_L^+.\\
\end{cases}
\end{equation*}
Then it follows from the comparison principle that

\begin{equation}\label{4-18}
\|\phi^+_2\|_{L^\infty}\leq C_1M, \quad {\rm{in }} \quad \mn^+.
\end{equation}
\par {\bf Step 3. The $ C^{1,\alpha} $  estimate of $ \phi_2^+ $ near the left corner points $ (0,0) $ and $ (0,m^+) $.}
\par In this step, we  focus on  the $ C^{1,\alpha} $  estimate of $\phi_2^+ $ near the corner point $ (0,0) $. The  corner point $ (0,m^+) $ can be treated similarly.
\par Define a comparison function $ v_2 $ in polar coordinates as follows
\begin{equation}\label{4-19}
v_2( r,\theta)=\frac{2}{\sin(\frac{1-\alpha}{5})} r^{1+\alpha}\sin(z( \theta))-r^{1+\alpha}\sin^{1+\alpha} ( \theta),\quad 0\leq \theta\leq \frac{\pi}{2},
\end{equation}
where
$  r=\sqrt{y_1^2+y_2^2} $, $ \theta=\arctan\frac{y_2}{y_1} $  and
\begin{equation*}
 z( \theta)=\frac{1-\alpha}{5}+\frac{6+4\alpha}{5} \theta, \quad \theta\in[0,\pi/2].
\end{equation*}
Note that $\frac{1-\alpha}{5}\leq z( \theta)\leq z\left(\frac{\pi}{2}\right)$ for $ \theta\in[0,\pi/2]$, and
\begin{equation*}
z\left(\frac{\pi}{2}\right)=
\frac{1-\alpha}{5}+\frac{6+4\alpha}{5}\frac{\pi}{2}=\pi-\frac{1-\alpha}{5}
-\frac{2(1-\alpha)}{5}(\pi-1)<\pi-\frac{1-\alpha}{5}.
\end{equation*}
 Thus $ \sin(z( \theta))\geq \sin(\frac{1-\alpha}{5}) $ and
\begin{equation}\label{4-20}
v_2(r, \theta)\geq r^{1+\alpha}\left(\frac{2\sin(\frac{1-\alpha}{5})}
{\sin(\frac{1-\alpha}{5})}-1\right)
\geq r^{1+\alpha}.
\end{equation}
Therefore
\begin{equation}\label{4-21}
\begin{aligned}
\Delta v_2&= (\p_{{r}}^2 + \frac{1}{{r}}\p_{{r}}+ \frac1{{r}^2}\p_{{\theta}}^2) v_2\\
&=\frac{2((5+5\alpha)^2-(6+4\alpha)^2)}{25
\sin(\frac{1-\alpha}{5})} r^{\alpha-1}\sin(z(\theta))-\alpha(1+\alpha) r^{\alpha-1}\sin^{\alpha-1} ( \theta)\\
&\leq -\alpha(1+\alpha) r^{\alpha-1}\sin^{\alpha-1} ( \theta)=-\alpha(1+\alpha)y_2^{\alpha-1}.\\
\end{aligned}
\end{equation}
\par Define
 \begin{equation*}
 \hat \phi_2^+= \phi_2^+-\phi_2^+(0,0)-\n \phi_2^+(0,0)\cdot  \mathbf{y}=\phi_2^+-
 (h_5^+)^\prime(0)y_1.
 \end{equation*}
 On the boundary $ \Sigma $, the following inequality holds:
 \begin{equation}\label{4-z}
 \hat h_5^+(y_1)=h_5^+(y_1)- h_5^+(0)-(h_5^+)^\prime(0)y_1\leq \|h_5^+\|_{1,\alpha;[0,L]}y_1^{1+\alpha}.
 \end{equation}
Let  $  r_0>0 $ be small and fixed and set $ B_{ r_0}^+=B_{ r_0}(0,0)\cap \mn^+ $. Then it follows from
\eqref{4-18}-\eqref{4-z} that there exists a suitably large positive constant  $ C_2$ independent of $ M$   such that
\begin{equation*}
\begin{cases}
\Delta(C_2Mv_2\pm \hat\phi_2^+)\leq 0, &\quad {\rm{in}} \quad B_{ r_0}^+,\\
C_2Mv_2\pm \hat\phi_2^+\geq0, &\quad {\rm{on}} \quad  B_{ r_0}^+\cap
\p \mn^+,\\
C_2Mv_2\pm \hat\phi_2^+\geq0,&\quad {\rm{on}} \quad  B_{ r_0}^+\cap \{r=r_0\}.\\
\end{cases}
  \end{equation*}
   This, together with the comparison principle, yields
\begin{equation}\label{4-22}
\|\hat\phi_2^+\|_{L^\infty}\leq C_2 r^{1+\alpha}
M, \quad {\rm{in }} \quad B_{r_0}^+.
\end{equation}
With estimate \eqref{4-22}, one can use the scaling technique to obtain the $ C^{1,\alpha} $ estimate up to the corner $ (0,0) $. More precisely, for any point $ P_\ast\in  B_{ r_0/2}^+ $ with polar coordinates $ (d_\ast,\theta_\ast) $, we consider the interior estimate and boundary  estimate for different values of $ \theta_\ast $.
\par  Case 1: Interior estimate for $ \theta_\ast\in[\frac{\pi}{6},\frac{\pi}{3}] $. Set $ B_1=B_{\frac{d_\ast}{6}}(P_\ast) $ and $ B_2=B_{\frac{d_\ast}{3}}(P_\ast) $, then  $ B_1\subset B_2 \subset B_{r_0}^+ $. By the Schauder interior estimate  \cite[Theorem 4.15]{GT98}, one has
\begin{equation}\label{4-y}
\|\hat \phi_2^+\|_{1,\alpha;B_1}^{(0)}\leq C\bigg(\|\hat \phi_2^+\|_{0,0;B_2}+
\sum_{i=1}^2\|h_{i+2}\|_{0,\alpha;B_2}^{(1)}\bigg),
\end{equation}
where    $ C $ is a positive constant independent of $ d_\ast $ and the weighted H\"{o}lder norm is defined as
\begin{equation*}
\|u\|_{k,\alpha;\md}^{(l)}=\sum_{j=0}^k d^{j+l}\sup_{|\beta|=j}\sup_{\mathbf{x}\in\md}|D^\beta u(\mathbf{x})|+d^{j+\alpha+l}\sup_{|\beta|=k}\sup_{\mathbf{x}_1,\mathbf{x}_2\in\md}
\frac{|D^\beta u(\mathbf{x}_1)-D^\beta u(\mathbf{x}_2)|}{|\mathbf{x}_1-\mathbf{x}_2|^\alpha}, \ d={\rm{diam}}\md.
\end{equation*}
   The estimate \eqref{4-y},  together with  \eqref{4-22}, yields that
\begin{equation}\label{4-x}
\|\hat \phi_2^+\|_{1,\alpha;B_1} \leq Cd_{\ast}^{-1-\alpha}\|\hat \phi_2\|_{1,\alpha;B_1}^{(0)}
\leq CM.
\end{equation}
\par  Case 2: Boundary  estimate for $ \theta_\ast<\frac{\pi}{6} $ and $ \theta_\ast>\frac{\pi}{3} $.  Set $ B_3=B_{\frac{2d_\ast}{3}}(P_\ast)\cap \mn^+ $ and $ B_4=B_{\frac{3d_\ast}{4}}(P_\ast)\cap \mn^+ $. By the Schauder boundary estimate  \cite[Theorem 4.16]{GT98}, one gets
\begin{equation}\label{4-l}
\begin{aligned}
\|\hat \phi_2^+\|_{1,\alpha;B_3}^{(0)}\leq C\bigg(\|\hat \phi_2^+\|_{0,0;B_{4}}+\sum_{i=1}^2\|h_{i+2}\|_{0,\alpha;B_{4}}^{(1)}
+\|\hat h_5^+\|_{1,\alpha;{B_{4}}\cap\{y_2=0\}}
\bigg).
\end{aligned}
\end{equation}
\par Combining  Case 1 and Case 2 yields the following corner estimate:
\begin{equation}\label{4-k}
\|\hat\phi_2^+\|_{1,\alpha;B_{ r_0/2}^+} \leq CM.
\end{equation}
\par {\bf Step 4. The $ C^{1,\alpha} $   estimate of $ \phi_2^+ $ near the right corner points $ (L,0) $ and $ (L,m^+) $.}
\par In this step, we use the method of reflection  to obtain the $ C^{1,\alpha} $ estimate $ \phi^+_2 $ near the right corner points $ (L,0) $ and  $ (L,m^+) $ in $ \mn^+ $.
 Define
\begin{equation}\label{4-25}
\begin{cases}
\e \phi_2^+(y_1,y_2)=\phi_2^+(y_1,y_2),\ \e h_{3}^+(y_1,y_2)= h_{3}^+(y_1,y_2)-h_{3}^+(L,y_2), \\
\e h_{4}^+(y_1,y_2)= h_{4}^+(y_1,y_2),\
\e h_5(y_1)= h_5(y_1), \ \e h_6(y_1)= h_6(y_1),  \quad {\rm{for}} \ 0\leq y_1\leq L;\\
\e \phi_2^+ (y_1,y_2)=\phi_2^+(2L-y_1,y_2),\ \e h_{3}^+(y_1,y_2)=-( h_{3}^+(2L-y_1,y_2)-h_{3}^+(L,y_2)), \\
\e h_{4}^+(y_1,y_2)= h_{4}^+(2L-y_1,y_2),\
\e h_{5}^+(y_1)=  h_{5}^+(2L-y_1),\\
\e h_{6}^+(y_1)=  h_{6}^+(2L-y_1),
 \quad {\rm{for}} \ L\leq y_1\leq 2L.\\
\end{cases}
\end{equation}
Then $ \e \phi_2 $ solves the following problem
\begin{equation}\label{4-26}
\begin{cases}
\begin{aligned}
&\Delta \e \phi_2^+=\sum_{i=1}^2\p_{y_i} \e h_{i+2}^+,\ {\rm{in}}\ E=(0,2L)\times(0,m^+),\\
& \e \phi_2^+(0,y_2)=
\p_{y_1}\e \phi_2^+(2L,y_2)=0,\\
&\e \phi_2^+(y_1,0)=\e h_5(y_1),\\
&\e \phi_2^+(y_1,m^+)=\e h_6(y_1).
\end{aligned}
\end{cases}
\end{equation}
  For a connected domain $  E_l=[\frac{L}{2},\frac{3L}{2}]\times(0,m^+)\subset E $, by the Schauder interior estimate  \cite[Theorem 8.32]{GT98} and  boundary estimate \cite[Corollary 8.36]{GT98}, the following estimate holds:
\begin{equation}\label{4-27}
\begin{aligned}
\| \e \phi_2^+\|_{1,\alpha;\overline{ E_l}}
\leq C\bigg(\|\e \phi_2^+\|_{0,0;E}+
\sum_{i=1}^{2}\|\e h_{i+2}^+\|_{0,\alpha;E}+
\sum_{i=5}^6\|\e h_{i}\|_{1,\alpha;[0,2L]}\bigg)
\leq CM.\\
\end{aligned}
\end{equation}
 \par This, together with   the  estimate \eqref{4-k}, shows  the $ C^{1,\alpha} $ estimate up to the four corners. Away from four corners, we have the standard Schauder interior and boundary estimates. Thus one can conclude that
  \begin{equation}\label{4-28}
  \|  \phi_2^+\|_{1,\alpha;\overline{ \mn^+}}\leq CM.
  \end{equation}
  \par {\bf Step 5. The $ C_{2,\alpha}^{(-1-\alpha)}( \mn^+) $ estimate of $ \phi_2^+ $.}
  \par In this step, we derive the $ C_{2,\alpha}^{(-1-\alpha)}(\mn^+) $ estimate  of $\phi_2^+  $ by a scaling argument. For any fixed point $\mathbf{y}_\ast\in \overline{\mn^+} \setminus(\overline{\Sigma}\cup \overline{\Sigma_w^+}) $, define $ 2d={\rm{dist}}(\mathbf{y}_\ast,\overline{\Sigma}\cup \overline{\Sigma_w^+})$ and a scaled function
  \begin{equation*}
  \phi_2^{(\mathbf{y}_\ast)}(\mathbf{z}):=\frac{1}{d^{1+\alpha}}( \phi_2^+(\mathbf{y}_\ast+d\mathbf{z})-\phi_2^+(\mathbf{y}_\ast)-\n \phi_2^+(\mathbf{y}_\ast) \cdot \mathbf{z}),
  \end{equation*}
  for $ \mathbf{z}\in \{\mathbf{z}\in B_1(0):\mathbf{y}_\ast+d\mathbf{z}\in \mn^+\}=:\mm_1(\mathbf{y}_\ast)$. It follows from \eqref{4-28} that
   \begin{equation}\label{4-29}
  \|  \phi_2^{(\mathbf{y}_\ast)}\|_{0,0;\mm_1(\mathbf{y}_\ast)}\leq \|  \phi_2^+\|_{1,\alpha;\overline{ \mn^+}}\leq CM.
  \end{equation}
\par  Next, substituting $  \phi_2^{(\mathbf{y}_\ast)} $ into the first equation in \eqref{4-16}
  and applying  the standard Schauder  estimate yield that
  \begin{equation}\label{4-30}
  \|  \phi_2^{(\mathbf{y}_\ast)}\|_{2,\alpha;\mm_{1/2}(\mathbf{y}_\ast)}\leq CM.
  \end{equation}
  Therefore, re-scaling back gives
  \begin{equation}\label{4-31}
\|\phi_2^+\|_{2,\alpha;\mn^+}^{(-1-\alpha)}\leq CM.
\end{equation}
Finally, it follows from  the expressions of $ \phi^+_i  $ and $ h_i^+ $ that
\begin{equation}\label{4-35}
\begin{aligned}
\|\phi^+\|_{2,\alpha;\mn^+}^{(-1-\alpha)}
\leq C\bigg(\sum_{i=1}^2\| h_i^+\|_{1,\alpha;\mn^+}^{(-\alpha)}+\|g_{1}^+\|_{2,\alpha;\Sigma_0^+}^{(-1-\alpha)}
+\|g_{cd}\|_{1,\alpha;[0,L]}
+\|g_{2}^+\|_{1,\alpha;[0,L]}
+\|\e\omega_{ex}^+\|_{1,\alpha;\overline{\Sigma_L^+}}\bigg).
\end{aligned}
\end{equation}
 Therefore the proof of Lemma 4.2 is completed.
\end{proof}
\par Now, we come back to the proof of Lemma 4.1. \\
\\
{\bf Proof of Lemma 4.1.}
For the problem \eqref{4-7}, it follows from \eqref{4-8}  that the  coefficients of  \eqref{4-7} satisfy
\begin{equation*}
\|a_{ij}-e_i^+\delta_{ij}\|_{1,\alpha;\mn^+}^{(-\alpha)}\leq \kappa^+,
\end{equation*}
  where $\kappa^+>0$   depends on $ \delta_1 $,  $ \bm U_b^+ $, $ L $ and $ \alpha $.
Thus we  apply the Banach contraction mapping theorem to prove the existence and uniqueness of the solution $  \phi^+\in  C_{2,\alpha}^{(-1-\alpha)}(\mn^+) $ of the problem \eqref{4-7} for sufficiently small $ \kappa^+ $ in the space
\begin{equation*}
\mk=\{v:\|v\|_{2,\alpha;\mn^+}^{(-1-\alpha)}< \infty\}.
\end{equation*}
Define a mapping $ \ma: \mk\rightarrow \mk $. For $ v\in \mk $, we consider the problem \eqref{4-12} with functions $ h_i^+ $ on the right-hand sides defined as
\begin{equation}\label{4-36}
h_i^+=f_i^++\sum_{j=1}^2(e_i^+\delta_{ij}-a_{ij})\p_{y_j}v.
\end{equation}
Then $ h_i^+\in C_{1,\alpha}^{(-\alpha)}(\mn^+ ) $ and
\begin{equation}\label{4-37}
\sum_{i=1}^2\|h_i\|_{1,\alpha;\mn^+}^{(-\alpha)}
\leq C\bigg(\sum_{i=1}^2\|f_i\|_{1,\alpha;\mn^+}^{(-\alpha)}
+\|v\|_{2,\alpha;\mn^+}^{(-1-\alpha)}\bigg).
\end{equation}
By Lemma 4.2, there exists a unique solution $\phi^+ \in C_{2,\alpha}^{(-1-\alpha)}(\mn^+) $ to the problem \eqref{4-12}. We define the mapping $ \ma: \mk\rightarrow \mk $ by setting $ \ma v= \phi^+ $.
\par Now we show that $ \ma $ is a contraction mapping in the norm $ \|\cdot\|_{\mk} $ if $ \kappa^+>0 $ is small. Let $ v_1,v_2\in \mk  $ and $ \phi_k^+=\ma v_k $ for $ k=1,2 $. Then $ \hat \phi^+= \phi_1^+- \phi_2^+ $ satisfies
\begin{equation}\label{4-38}
  \begin{cases}
e_1^+\p_{y_1}^2\hat\phi^+
+e_2^+\p_{y_2}^2\hat\phi^+
=\sum_{i=1}^2\p_{y_i}(\sum_{j=1}^2(e_i^+\delta_{ij}-a_{ij})\p_{y_j}(v_1-v_2)), &\quad {\rm{in}} \quad \mn^+,\\
\hat\phi^+(0,y_2)=0, &\quad {\rm{on}} \quad
  \Sigma_0^+,\\
 \hat\phi^+(y_1,0)=0, &\quad {\rm{on}} \quad
  \Sigma,\\
 \hat\phi^+(y_1,m^+)=0, &\quad {\rm{on}} \quad
  \Sigma_w^+,\\
  \p_{y_1}\hat\phi^+(L,y_2)=0, &\quad {\rm{on}} \quad
  \Sigma_L^+.\\
  \end{cases}
      \end{equation}
      It follows from \eqref{4-13} that one has
     \begin{equation}\label{4-39}
     \| \phi_1^+- \phi_2^+\|_{\mk}\leq \mc^+\kappa^+
      \|v_1^+-v_2^+\|_{\mk}.
      \end{equation}
      Therefore the mapping $ \ma: \mk\rightarrow \mk $ is a contraction mapping in the norm $ \|\cdot\|_{\mk} $ if $ \kappa^+<\frac{1}{\mc^+} $.
      \par For such $ \kappa^+ $, there exists a fixed point $  \phi^+ \in \mk$ satisfying $ \ma  \phi ^+= \phi ^+ $. Then $ \phi^+ $ satisfies \eqref{4-12} with right-hand sides given by \eqref{4-36} computed with $ v=\phi^+ $. Moreover, by \eqref{4-13} and \eqref{4-37},
     there holds
        \begin{equation*}
\begin{aligned}
\|\phi^+\|_{2,\alpha;\mn^+}^{(-1-\alpha)}
\leq \mc^+\bigg(\sum_{i=1}^2\| f_i^+\|_{1,\alpha;\mn^+}^{(-\alpha)}+\|g_{1}^+\|_{2,\alpha;\Sigma_0^+}^{(-1-\alpha)}
+\|g_{cd}\|_{1,\alpha;[0,L]}
+\|g_{2}^+\|_{1,\alpha;[0,L]}
+\|\e\omega_{ex}^+\|_{1,\alpha;\overline{\Sigma_L^+}}\bigg).\\
\end{aligned}
\end{equation*}
Then Lemma 4.1 is proved.
  \subsection{Solving the nonlinear  boundary value problem by the fixed point argument}\noindent
  \par For a given function $   \bar\phi^+\in \mj_{upp}(\delta_1) $,
  it follows from Lemma  4.1 that the problem \eqref{4-7} has a unique solution $\phi^{+}\in C_{2,\alpha}^{(-1-\alpha)}(\mn^+) $ satisfying the estimate in \eqref{4-11}.  This, together with \eqref{4-6}, yields
   \begin{equation}\label{4-40}
\begin{aligned}
\|\phi^+\|_{2,\alpha;\mn^+}^{(-1-\alpha)}
&\leq \mc^+\bigg(\sum_{i=1}^2\| f_i^+\|_{1,\alpha;\mn^+}^{(-\alpha)}+\|g_{1}^+\|_{2,\alpha;\Sigma_0^+}^{(-1-\alpha)}
+\|g_{cd}\|_{1,\alpha;
[0,L]}
+\|g_{2}^+\|_{1,\alpha;
[0,L]}
+\|\e\omega_{ex}^+\|_{1,\alpha;\overline{\Sigma_L^+}}\bigg)\\
& \leq
  \mc^+
  \bigg(\delta_1 \sigma_v+\sigma_v+\|\eta\|_{0,\alpha;[0,L]}\bigg),
  \end{aligned}
  \end{equation}
  where   $ \mc^+>0 $
  depends only  on  $ {\bm U}_b^+ $, $ L $ and $\alpha $.
 \par  Similarly,   define
 \begin{equation}\label{4-41}
 \mj_{low}(\delta_2)
 =\{\phi^-:\phi^-(0,0)=0, \|\phi^-\|_{2,\alpha; \mn^-}^{(-1-\alpha)}
 \leq
 \delta_2\},
 \end{equation}
 where $ \delta_2 $ is a positive constant to be determined later.
 Then for given $ \bar\varphi^- $ such that $ \bar\phi^-=\bar\varphi^--\varphi^-_b\in \mj_{low}(\delta_2) $, we  solve the following problem:
 \begin{equation}\label{4-42}
 \begin{cases}
   \sum_{i,j=1,2}\p_{y_i}(a_{ij}(\n
   \bar\phi^-)\p_{y_j}\phi^-)=\sum_{i=1}^2\p_{y_i}f_i^- &\quad {\rm{in }} \quad      \mn^-,\\
     \bar\phi^-(0,y_2)=g_{1}^-(y_2), &\quad {\rm{on}} \quad
   \Sigma_0^-,\\
  \phi^-(y_1,0)=g_{cd}(y_1), &\quad {\rm{on}} \quad
   \Sigma,\\
   \phi^-(y_1,-m^-)=g_{2}^-(y_1), &\quad {\rm{on}} \quad
   \Sigma_w^-,\\
   \p_{y_1}\phi^-(L,y_2)=\e \omega_{ex}^-(y_2), &\quad {\rm{on}} \quad
   \Sigma_L^-,\\
   \end{cases}
 \end{equation}
  where
   \begin{equation*}
      \begin{aligned}
      &f_i^-=W_i(\n\varphi_b^-, A_{b}^-,  B_{b}^-)-W_i(\n\varphi_b^-, \e
      A_{en}^-, \e B_{en}^-) , \quad i=1,2,\\
       &g_{1}^-(y_2)=-\int_{y_2}^0\left(\frac{1}{\e J_{en}^-(s)}-\frac{1}{ J_b^-}\right)\de s,\\
    &g_{2}^-(y_1)=g^-(y_1)+1-\int_{-m^-}^0\left(\frac{1}{\e J_{en}^-(s)}-\frac{1}{ J_b^-}\right)\de s,\\
    &\e\omega_{ex}^-(y_2)=\omega_{ex}(\varphi_b^-(y_2)+\bar\phi^-(L,y_2)).
    \end{aligned}
    \end{equation*}
     Then similar arguments as in Lemma 4.1 yield that \eqref{4-42}
 has a unique solution $  \phi^-\in  C_{2,\alpha}^{(-1-\alpha)}(\mn^-) $ satisfying
 \begin{equation}\label{4-43}
 \begin{aligned}
  \|\phi^-\|_{2,\alpha;\mn^-}
  ^{(-1-\alpha)}
  &\leq \mc^-\bigg(\sum_{i=1}^2\|f_i^-\|_{1,\alpha;\mn ^-}
  ^{(-\alpha)}+\|g_{1}^-\|_{2,\alpha;\Sigma _{0}^-}^{(-1-\alpha)}
  +\|g_{cd}\|_{1,\alpha;[0,L]} +\|g_{2}^-\|_{1,\alpha;[0,L]}
  +\|\e\omega_{ex}^-\|_{1,\alpha;\overline{\Sigma_L^-}}\bigg)\\
  &\leq
 \mc^-\left(\delta_2\sigma_v +\sigma_v+\|\eta\|_{0,\alpha;[0,L]}\right),
  \end{aligned}
  \end{equation}
  where   $ \mc^->0 $
  depends only  on  $ {\bm U}_b^- $, $ L $ and $\alpha $.
\par Define an iteration set
\begin{equation*}
\mj(\delta_1,\delta_2)=\mj_{upp}(\delta_1)\times \mj_{low}(\delta_2),
\end{equation*}
and a map $ \mt $ as follows
\begin{equation}\label{4-44}
\mt( \bar\phi^+, \bar\phi^-)=( \phi^+, \phi^-),
\quad{\rm{ for \ each}} \ ( \bar\phi^+, \bar\phi^-)\in \mj(\delta_1,\delta_2).
\end{equation}
Then it follows from \eqref{4-40} and \eqref{4-43} that
$ ( \phi^+, \phi^-) $ satisfies
\begin{equation}\label{4-45}
\begin{aligned}
\|\phi^+\|_{2,\alpha;\mn^+}^{(-1-\alpha)}+
\| \phi^-\|_{2,\alpha;\mn^-}^{(-1-\alpha)}
\leq \mc_1
\left((\delta_1+\delta_2)\sigma_v+\sigma_v+
\|\eta\|_{0,\alpha;[0,L]}\right),
\end{aligned}
\end{equation}
where  $ \mc_1>0 $  depends  on $ (\bm U_b^+,\bm U_b^-) $, $ L $
and $ \alpha $.   We assume that
\begin{equation}\label{4-46}
\|\eta\|_{0,\alpha;[0,L]}\leq \delta_3,
\end{equation}
where $ \mc_1 \delta_3\leq \frac{\delta_1+\delta_2}{2}  $
with $ \mc_1 $ given in \eqref{4-45}.  Let $\sigma_1=\frac{1}{4(1+\mc_1)}$ and choose $\delta_1=\delta_2= 2\mc_1\sigma_v$. Then if $ \sigma_v\leq \sigma_1 $, one has
\begin{equation*}
\begin{aligned}
\| \phi^+\|_{2,\alpha;\mn^+}^{(-1-\alpha)}+
\| \phi^-\|_{2,\alpha;\mn^-}^{(-1-\alpha)} \leq  \frac{1}{2}(\delta_1+\delta_2)
+\mc_1\delta_3 \leq \delta_1+\delta_2. \end{aligned}
\end{equation*}
Hence $ \mt $ maps $ \mj(\delta_1,\delta_2) $ into itself.
  \par Next, we will show that $ \mt $ is a
  contraction mapping in $ \mj(\delta_1,\delta_2) $. Let $(\bar\phi_i^+, \bar\phi_i^-)\in \mj(\delta_1,\delta_2)$,  $ i=1,2 $, we  have $ ( \phi_i^+, \phi_i^-)=\mt(\bar\phi_i^+, \bar\phi_i^-) $.
Define
\begin{equation*}
\bar\Phi^\pm= \bar\phi_1^\pm- \bar\phi_2^\pm,\
\Phi^\pm= \phi_1^\pm- \phi_2^\pm.
\end{equation*}
Then  it follows from \eqref{4-40} and  \eqref{4-43} that one obtains
\begin{equation}\label{4-48}
\begin{aligned}
\sum_{I=\pm}\| \Phi^I\|_{2,\alpha;\mn^I}^{(-1-\alpha)}
&\leq \mc_2\sum_{I=\pm}\bigg((\| a_{ij}(\n
   \bar\phi_1^I)-a_{ij}(\n
   \bar\phi_2^I))\p_{y_j}\phi_2^I\|_{1,\alpha;\mn^I}^{(-\alpha)}\\
   &\qquad\qquad+\|  \omega_{ex}(\varphi_b^I(y_2)+\bar\phi_1^I(L,y_2))
   -\omega_{ex}(\varphi_b^I(y_2)+\bar\phi_2^I(L,y_2))\|
   _{1,\alpha;\overline{\Sigma_L^I}}\bigg)\\
&\leq\mc_2 (\delta_1+\delta_2+\sigma_v)\sum_{I=\pm}\| \bar\Phi^I\|_{2,\alpha;\mn^I}^{(-1-\alpha)}.
\end{aligned}
\end{equation}
Setting
\begin{equation}\label{4-49}
 \sigma_2=\min\left\{\sigma_1,\frac{1}{
4\mc_2(4\mc_1+1)} \right\}.
\end{equation}
Then for $ \sigma_v\leq \sigma_2 $,
$ \mc_2 (\delta_1+\delta_2+\sigma_v)=\mc_2 (4\mc_1+1)\sigma_v\leq \frac{1}{4}  $, hence the mapping $ \mt $ is a contraction mapping so that $\mt $ has a unique fixed point in $ \mj(\delta_1,\delta_2)$.
  \section{The construction of the contact discontinuity curve}\noindent
  \par  To complete the proof of Theorem 3.2, we  use Theorem 3.3 to find the contact discontinuity  $ g_{cd}(y_1) $ such that \eqref{3-23} is satisfied. Firstly,
   define a Banach space
\begin{equation*}
 V=\{\eta: \|\eta \|_{0,\alpha;[0,L]}
  < \infty\}.
  \end{equation*}
 Set
  \begin{equation}\label{5-1}
 V_{\delta_3}=\{\eta\in V:  \|\eta \|_{0,\alpha;[0,L]}
 \leq \delta_3\},
  \end{equation}
  where  $ \delta_3$ is defined in \eqref{4-46}. Then for any $ \eta\in  V_{\delta_3}$,
   the problem \eqref{3-19}-\eqref{3-22} has a unique solution $ (\varphi^+,  \varphi^-) $ satisfying
 \begin{equation}\label{5-2}
   \|\varphi^+-\varphi_b^+\|_{2,\alpha;\mn^+}^{(-1-\alpha)}+
   \|\varphi^--\varphi_b^-\|_{2,\alpha;\mn^-}
   ^{(-1-\alpha)}\leq 4\mc_1\sigma_v.
   \end{equation}
  \par Let
\begin{equation*}
   \begin{aligned}
  V_0&= C^{1,\alpha}([0,1])\times C^{1,\alpha}([0,1])\times C^{1,\alpha}([0,1])\times
    C^{1,\alpha}([0,L])
   \times C^{1,\alpha}([-1,0])\\
  &\quad\quad\times
   C^{1,\alpha}([-1,0])\times C^{1,\alpha}([-1,0])
  \times
     C^{1,\alpha}([0,L])\times
  C^{2,\alpha}([g^-(L),g^+(L)]).
   \end{aligned}
    \end{equation*}
    Then we set
    \begin{equation}\label{5-3}
    V_{\delta_4}=\{
    \bm\zeta_0\in V_0:\|\bm \zeta_0-\bm \zeta_b\|_{ V_0}\leq \delta_4\},
    \end{equation}
    where \begin{equation*}
     \bm \zeta_0=( A_{en}^+, B_{en}^+, J_{en}^+, g^+,
   A_{en}^-,B_{en}^-,J_{en}^-, g^-, \omega_{ex}), \ {\rm{and}} \
     \bm \zeta_b= (A_{b}^+, B_b^+,J_b^+, 1, A_b^-, B_b^-,J_b^-, -1,0).
   \end{equation*}
     \par Define a map $ \mq:   V_{\delta_4} \times  V_{\delta_3}\rightarrow  V $ by
  \begin{equation}\label{5-4}
  \mq(\bm \zeta_0,\eta):=\left(W_2(\n \varphi_b^++\n \phi^+ ,\e A_{en}^+,\e B_{en}^+ )-  W_2(\n \varphi_b^-+\n \phi^-,\e A_{en}^-,\e B_{en}^- )\right)(y_1,0).
  \end{equation}
   Hence \eqref{3-23} can be written as the equation
  \begin{equation}\label{5-5}
  \mq(\bm \zeta_0,\eta)=0.
  \end{equation}
     It suffices to verify the conditions $ \rm(i) $, $ \rm(ii) $ and $ \rm(iii) $ in Theorem 3.3.
  \par  Obviously,
  \begin{equation*}
  \mq(\bm \zeta_b,0)=0.
  \end{equation*}
  Next, we will divided into two steps to  verify $ \rm(ii) $ and $ \rm(iii) $.
\par {\bf Step 1. Differentiability of $ \mq $.}
  \par
   Given any $ \eta,\eta_1\in V_{\delta_3} $, and  $ \tau>0 $,
 let $  \phi^\pm_{\e \eta} $ and
  $  \phi^\pm_{\eta} $ be the solutions of the following equations
  \begin{equation}\label{5-6}
  \begin{cases}
  \begin{aligned}
  &\sum_{i,j=1,2}\p_{y_i}(a_{ij}(\phi^\pm_{\e \eta})\p_{y_j}\phi^\pm_{\e \eta})=\sum_{i=1}^2\p_{y_i}f_i^\pm, \\
&\phi^\pm_{\e \eta}(0,y_2)=g_{1}^\pm(y_2), \\
 &\phi^\pm_{\e \eta}(y_1,0)=\int_{0}^{y_1}(\eta+\tau \eta_1)(s)\de s, \\
  &\phi^\pm_{\e \eta}(y_1,\pm m^\pm)=g_{2}^\pm(y_1),\\
  &\p_{y_1}\phi^\pm_{\e \eta}(L,y_2)=\omega_{ex}(\varphi_b^\pm(y_2)+\phi^\pm_{\e \eta}(L,y_2)),\\
  \end{aligned}
  \end{cases}
\end{equation}
and
\begin{equation}\label{5-7}
  \begin{cases}
  \begin{aligned}
&\sum_{i,j=1,2}\p_{y_i}(a_{ij}(\phi^\pm_{ \eta})\p_{y_j}\phi^\pm_{ \eta})=\sum_{i=1}^2\p_{y_i}f_i^\pm, \\
&\phi^\pm_{\eta}(0,y_2)=g_{1}^\pm(y_2), \\
 &\phi^\pm_{ \eta}(y_1,0)=\int_{0}^{y_1}\eta(s)\de s, \\
 & \phi^\pm_{ \eta}(y_1,\pm m^\pm)=g_{2}^\pm(y_1),,\\
  &\p_{y_1}\phi^\pm_{ \eta}(L,y_2)=\omega_{ex}(\varphi_b^\pm(y_2)+\phi^\pm_{ \eta}(L,y_2)).\\
\end{aligned}
  \end{cases}
\end{equation}
\par Denote
  \begin{equation*}
  \phi^{\pm}_{\tau}=\frac{ \phi^\pm_{\e \eta}- \phi^\pm_{\eta}}{\tau}.
  \end{equation*}
    Then it follows from \eqref{5-6} and \eqref{5-7} that $  \phi^{\pm}_{\tau}$ satisfy the following equations:
  \begin{equation}\label{5-8}
  \begin{cases}
  \begin{aligned}
  &\sum_{i,j=1,2}\p_{y_i}(a_{ij}(\n
   \phi_{\eta}^\pm)\p_{y_j}\phi^{\pm}_{\tau})
  =-\sum_{i,j=1,2}\p_{y_i}\left(\frac{a_{ij}(\n
   \phi_{\e \eta}^\pm)-a_{ij}(\n
   \phi_\eta^\pm)}{\tau}\p_{y_j}\phi_{\e \eta}^{\pm}\right), \\
   &  \phi^{\pm}_{\tau}(0,y_2)=0,\\
  &\phi^{\pm}_{\tau}(y_1,0)= \int_{0}^{y_1}\eta_1(s)\de s, \\
  &\phi^{\pm}_{\tau}(y_1,\pm m^\pm)=0, \\
   &\p_{y_1}\phi^{\pm}_{\tau}(L,y_2)= \frac{\omega_{ex}(\varphi_b^\pm(y_2)+\phi^\pm_{\e \eta}(L,y_2))-\omega_{ex}(\varphi_b^\pm(y_2)+\phi^\pm_{ \eta}(L,y_2))}{\tau}.\\
    \end{aligned}
    \end{cases}
  \end{equation}
 Similar to the proof of Lemma 4.1,   the following estimate holds:
  \begin{equation}\label{5-9}
  \begin{aligned}
  \sum_{I=\pm}\|\phi^{I}_{\tau}\|_{2,\alpha; \mn^I}
  ^{(-1-\alpha)}
  &\leq \mc_3\sum_{I=\pm}\left(\left(\| \phi^I_{\e \eta}\|_{2,\alpha;\mn^I}
  ^{(-1-\alpha)}
   +\| \omega_{ex}\|_{2,\alpha;[g^-(L),g^+(L)]}\right)
   \|\phi^{I}_{\tau}\|
  _{2,\alpha;\mn^I}
  ^{(-1-\alpha)}+\|  \eta_1\|_{0,\alpha ;[0,L]}\right)\\
 &\leq \mc_3\left(\delta_1+\delta_2+\delta_3
  +\tau\|  \eta_1\|_{0,\alpha ;[0,L]}+\sigma_v\right)
  \sum_{I=\pm}\|\phi^{I}_{\tau}\|
  _{2,\alpha;\mn^I}^{(-1-\alpha)}
  +\mc_3\| \eta_1\|_{0,\alpha ;[0,L]}\\
  &\leq \mc_3\left((3+4 \mc_1)\sigma_v
  +\tau\| \eta_1\|_{0,\alpha ;[0,L]}\right)
  \sum_{I=\pm}\|\phi^{I}_{\tau}\|_{2,\alpha;\mn^I}
  ^{(-1-\alpha)}
  +\mc_3\|  \eta_1\|_{0,\alpha ;[0,L]},\\
  \end{aligned}
  \end{equation}
  where   $ \mc_3>0 $  depends  on $ ({\bm U}_b^+,{\bm U}_b^-) $, $ L$ and $ \alpha $.
   Choosing $ \tau_1>0 $ such that  $ \mc_3\tau_1\|  \eta_1\|_{0,\alpha ;[0,L]}\leq \frac{1}{4} $
   and setting
  \begin{equation}\label{5-10}
  \sigma_3=\min\left\{\sigma_2,\frac{1}{4\mc_3(3+4\mc_1)}\right\},
   \end{equation}
where $ \sigma_2 $ is defined in \eqref{4-49}. Thus for $ \tau \in (0,\tau_1) $ and $ \sigma_v\leq \sigma_3 $, one has
  \begin{equation}\label{5-11}
  \sum_{I=\pm}\|\phi^{I}_{\tau}\|_{2,\alpha; \mn^I}
  ^{(-1-\alpha)}\leq
  2\mc_3\|  \eta_1\|_{0,\alpha ;[0,L]}.
  \end{equation}
  Therefore, there exists a subsequence $\{\tau_k\}_{k=1}^\infty $ such that $ \phi^{\pm}_{\tau_k} $ converge to $ \phi^{\pm}_0$  in  $ C_{2,\alpha^\prime}^{(-1-\alpha^\prime)}(\mn^\pm) $
   as $ \tau_k\rightarrow 0 $ for some $ 0<\alpha^\prime<\alpha $.
The estimate \eqref{5-11} also implies that  $ \phi^{\pm}_0\in C_{2,\alpha}^{(-1-\alpha)}(\mn^\pm) $ and
  \begin{equation}\label{5-13}
  \sum_{I=\pm}\|\phi^{I}_{0}\|
  _{2,\alpha;\mn^I}^{(-1-\alpha)}
  \leq\mc_3\|  \eta_1\|_{0,\alpha ;[0,L]}.
  \end{equation}
  \par Define a map $ D_\eta\mq(\bm\zeta_0,\eta) $ by
  \begin{equation}\label{5-14}
  \begin{aligned}
 D_\eta\mq(\bm\zeta_0,\eta)(\eta_1)
  &=\sum_{i=1}^2\bigg(\p_{\p_{y_i}\phi_{\eta}^{+}} W_2(\n \varphi_b^{+}+\n \phi^{+}_{\eta},\e A_{en}^+,\e B_{en}^+)\p_{y_i}\phi^{+}_{0}\\
  &\qquad\quad-\p_{\p_{y_i}\phi^-_{\eta}}W_2(\n \varphi_b^{-}+\n \phi^{-}_{\eta},\e A_{en}^-,\e B_{en}^-)\p_{y_i}\phi^{-}_{0}\bigg)(y_1,0),\\
  \end{aligned}
  \end{equation}
   which is  a linear map from $   V $ to $ V $.
   \par  Next, we need to show that $ D_\eta\mq(\bm\zeta_0,\eta) $  is  the Fr\'{e}chet derivative of the functional $ \mq(\bm\zeta_0,\eta)$ with respect to $ \eta $.
To achieve this, we first consider the estimate of
  $ \phi^{\pm}_{\tau}-\phi^{\pm}_{0} $.
   It follows from \eqref{5-8} that
  $\phi^{\pm}_{0} $ satisfy the following equations:
  \begin{equation}\label{5-12}
  \begin{cases}
  \begin{aligned}
   &\sum_{i,j=1,2}\p_{y_i}(a_{ij}(\n
   \phi_{\eta}^\pm)\p_{y_j}\phi^{\pm}_{0})
  =-\sum_{i,j=1,2}\p_{y_i}\left(\sum_{k=1}^2\p_{\p_{y_k}
   \phi_\eta^\pm}a_{ij}(\n
   \phi_\eta^\pm)\p_{y_k}
   \phi_0^\pm \p_{y_j}\phi_{ \eta}^{\pm}\right), \\
   &  \phi^{\pm}_{0}(0,y_2)=0,\\
  &\phi^{\pm}_{0}(y_1,0)=\int_{0}^{y_1}\eta_1(s)\de s, \\
  &\phi^{\pm}_{0}(y_1,\pm m^\pm)=0, \\
   &\p_{y_1}\phi^{\pm}_{0}(L,y_2)= \omega_{ex}^\prime(\varphi_b^\pm(y_2)+\phi^\pm_{ \eta}(L,y_2))\phi^{\pm}_{0}(L,y_2).\\
    \end{aligned}
    \end{cases}
  \end{equation}
    Thus it follows from \eqref{5-12} and \eqref{5-8} that $ \phi^{\pm}_{\tau}-\phi^{\pm}_{0}$
satisfy
   \begin{equation}\label{5-15}
  \begin{cases}
  \begin{aligned}
  &\sum_{i,j=1,2}\p_{y_i}(a_{ij}(\n
   \phi_{\eta}^\pm)\p_{y_j}(\phi^{\pm}_{\tau}-\phi^{\pm}_{0})\\
  &\quad=-\sum_{i,j=1,2}\p_{y_i}\left(\frac{a_{ij}(\n
   \phi_{\e \eta}^\pm)-a_{ij}(\n
   \phi_\eta^\pm)}{\tau}\p_{y_j}\phi_{\e \eta}^{\pm}-\sum_{k=1}^2\p_{\p_{y_k}
   \phi_\eta^\pm}a_{ij}(\n
   \phi_\eta^\pm)\p_{y_k}
   \phi_0^\pm\p_{y_j}\phi_{ \eta}^{\pm}\right), \\
   &  (\phi^{\pm}_{\tau}-\phi^{\pm}_{0})(0,y_2)=0,\\
  &(\phi^{\pm}_{\tau}-\phi^{\pm}_{0})(y_1,0)=0, \\
  &(\phi^{\pm}_{\tau}-\phi^{\pm}_{0})(y_1,\pm m^\pm)=0, \\
   &\p_{y_1}(\phi^{\pm}_{\tau}-\phi^{\pm}_{0})(L,y_2)=
   \frac{\omega_{ex}(\varphi_b^\pm(y_2)+\phi^\pm_{\e \eta}(L,y_2))-\omega_{ex}(\varphi_b^\pm(y_2)+\phi^\pm_{ \eta}(L,y_2))}{\tau}\\
    &\qquad\qquad\qquad\qquad\qquad- \omega_{ex}^\prime(\varphi_b^\pm(y_2)+\phi^\pm_{ \eta}(L,y_2))\phi^{\pm}_{0}(L,y_2).\\
    \end{aligned}
    \end{cases}
  \end{equation}
  By the direct computation, one gets
\begin{equation*}
\begin{aligned}
&\sum_{I=\pm}\left\|\frac{a_{ij}(\n
   \phi_{\e \eta}^I)-a_{ij}(\n
   \phi_\eta^I)}{\tau}\p_{y_j}\phi_{\e \eta}^{I}-\sum_{k=1}^2\p_{\p_{y_k}
   \phi_\eta^I}a_{ij}(\n
   \phi_\eta^I)\p_{y_k}
   \phi_0^I\p_{y_j}\phi_{ \eta}^{I}\right\|_{1,\alpha;\mn^I}^{(-\alpha)}\\
&\leq\sum_{I=\pm}\left\|\left(\frac{a_{ij}(\n
   \phi_{\e \eta}^I)-a_{ij}(\n
   \phi_\eta^I)}{\tau}-\sum_{k=1}^2\p_{\p_{y_k}
   \phi_\eta^I}a_{ij}(\n
   \phi_\eta^I)\p_{y_k}
   \phi_0^I\right)\p_{y_j}\phi_{\e \eta}^{I}\right\|_{1,\alpha;\mn^I}^{(-\alpha)}\\
   &\quad+ \sum_{I=\pm}\left\|\sum_{k=1}^2\p_{\p_{y_k}
   \phi_\eta^I}a_{ij}(\n
   \phi_\eta^I)(\p_{y_k}
   \phi_{\e \eta}^I-\p_{y_k}
   \phi_{\eta}^I)\p_{y_j}
   \phi_0^I\right\|_{1,\alpha;\mn^I}^{(-\alpha)}\\
&\leq
\mc\|\phi^{I}_{\e \eta}\|_{2,\alpha;\mn^I}^{(-1-\alpha)}
\|\phi^{I}_{\tau}-\phi^{I}_{0}\|_{2;\alpha,\mn^I}
^{(-1-\alpha)}+\mc\sum_{I=\pm}\|\phi^{I}_{0}\|
_{2,\alpha;\mn^I}^{(-1-\alpha)}
\|\phi^{I}_{\tau}\|_{2,\alpha;\mn^I}^{(-1-\alpha)}\tau,\\
\end{aligned}
  \end{equation*}
  and
  \begin{equation*}
\begin{aligned}
&\sum_{I=\pm}\left\| \frac{\omega_{ex}(\varphi_b^I(y_2)+\phi^I_{\e \eta}(L,y_2))-\omega_{ex}(\varphi_b^I(y_2)+\phi^I_{ \eta}(L,y_2))}{\tau}\right.\\
&\qquad\quad\left.- \omega_{ex}^\prime(\varphi_b^I(y_2)+\phi^I_{ \eta}(L,y_2))\phi^{I}_{0}(L,y_2)\right\|
_{1,\alpha;\overline{\Sigma_L^I}}\\
&\leq \sum_{I=\pm}\left\|\int_{0}^1\omega_{ex}^\prime(\varphi_b^I(y_2)+\phi^I_{ \eta}(L,y_2)+t\tau\phi^{I}_{\tau}(L,y_2))\de t(\phi^{I}_{\tau}-\phi^{I}_{0})(L,y_2)\right\|
_{1,\alpha;\overline{\Sigma_L^I}}\\
&\quad+ \sum_{I=\pm}\left\|\int_{0}^1\left(\omega_{ex}^\prime(\varphi_b^I(y_2)+\phi^I_{ \eta}(L,y_2)+t\tau\phi^{I}_{\tau}(L,y_2))\right.\right.\\
&\qquad\qquad\quad\left.\left.-\omega_{ex}^\prime(\varphi_b^I(y_2)+\phi^I_{ \eta}(L,y_2))\right)\de t\phi^{I}_{0}(L,y_2) \right\|
_{1,\alpha;\overline{\Sigma_L^I}}\\
&\leq \mc\| \omega_{ex}\|_{2,\alpha;[g^-(L),g^+(L)]}\sum_{I=\pm}
\left(\|\phi^{I}_{\tau}-\phi^{I}_{0}\|_{2,\alpha;\mn^I}^{(-1-\alpha)}
+\|\phi^{I}_{0}\|
_{2,\alpha;\mn^I}^{(-1-\alpha)}
\|\phi^{I}_{\tau}\|_{2,\alpha;\mn^I}^{(-1-\alpha)}\tau\right).
\end{aligned}
  \end{equation*}
 Hence the following estimate can be derived:
  \begin{equation}\label{5-16}
  \begin{aligned}
  &\sum_{I=\pm}\|\phi^{I}_{\tau}-\phi^{I}_{0}\|
  _{2,\alpha;\mn^I}^{(-1-\alpha)}\\
  &\leq \mc\sum_{I=\pm}\|\phi^{I}_{0}\|
_{2,\alpha;\mn^I}^{(-1-\alpha)}
\|\phi^{I}_{\tau}\|_{2,\alpha;\mn^I}^{(-1-\alpha)}\tau
+\mc\|\phi^{I}_{\e \eta}\|_{2,\alpha;\mn^I}^{(-1-\alpha)}
\|\phi^{I}_{\tau}-\phi^{I}_{0}\|_{2,\alpha;\mn^I}
^{(-1-\alpha)}\\
&\quad+\mc\| \omega_{ex}\|_{2,\alpha;[g^-(L),g^+(L)]}\sum_{I=\pm}
\left(\|\phi^{I}_{\tau}-\phi^{I}_{0}\|_{2,\alpha;\mn^I}^{(-1-\alpha)}
+\|\phi^{I}_{0}\|
_{2,\alpha;\mn^I}^{(-1-\alpha)}
\|\phi^{I}_{\tau}\|_{2,\alpha;\mn^I}^{(-1-\alpha)}\tau\right)\\
 &\leq  \mc_4\left(\delta_1+\delta_2+\delta_3
  +\tau\| \eta_1\|_{0,\alpha ;[0,L]}+\sigma_v\right)
  \sum_{I=\pm}\|\phi^{I}_{\tau}-\phi^{I}_{0}\|
  _{2,\alpha;\mn^I}^{(-1-\alpha)}
  +\mc_4
  \left(\| \eta_1\|_{0,\alpha ;[0,L]}
  \right)^2\tau\\
  &\leq \mc_4\left((3+4 \mc_1)\sigma_v
  +\tau\|  \eta_1\|_{0,\alpha ;[0,L]}\right)
  \sum_{I=\pm}\|\phi^{I}_{\tau}-\phi^{I}_{0}\|
  _{2,\alpha;\mn^I}^{(-1-\alpha)}
  +\mc_4
  \left(\| \eta_1\|_{0,\alpha ;[0,L]}
  \right)^2\tau,\\
  \end{aligned}
  \end{equation}
  where   $ \mc_4>0 $  depends  on $ ({\bm U}_b^+,{\bm U}_b^-) $, $ L$ and $ \alpha $. Choosing  $ \tau_2>0 $  such that  $ \mc_4\tau_2\|  \eta_1\|_{0,\alpha ;[0,L]}\leq \frac{1}{8} $ and setting
  \begin{equation}\label{5-17}
   \sigma_4=\min\left\{\sigma_3,\frac{1}{8\mc_4(3+4\mc_1)}\right\}.
   \end{equation}
  Then for $ \tau \in (0,\tau_2) $ and $ \sigma_v\leq \sigma_4 $, one has
\begin{equation}\label{5-18}
  \sum_{I=\pm}\|\phi^{I}_{\tau}-\phi^{I}_{0}\|
  _{2,\alpha;\mn^I}^{(-1-\alpha)}\leq 4\mc_4
  \left(\|  \eta_1\|_{0,\alpha ;[0,L]}
  \right)^2\tau.
  \end{equation}
 \par By the definition of $ \mq(\bm\zeta_0,\eta) $ and $ D_\eta\mq(\bm\zeta_0,\eta) $, we have
  \begin{equation}\label{5-19}
  \begin{aligned}
  &\left\| \mq(\bm\zeta_0,\eta+\tau \eta_1)-\mq(\bm\zeta_0,\eta)- D_\eta\mq(\bm\zeta_0,\eta)( \tau\eta_1)\|_{0,\alpha ;[0,L]}\right.\\
  &=\|(W_2(\n \varphi_b^I+\n \phi^I_{\e \eta},\e A_{en}^I,\e B_{en}^I)
  - W_2(\n\varphi_b^I+\n \phi^I_{\eta},\e A_{en}^I,\e B_{en}^I)\\
  &\quad\left.-\tau\sum_{i=1}^2\left(\p_{\p_{y_i}\phi_{\eta}^{+}} W_2(\n \varphi_b^{+}+\n \phi^{+}_{\eta},\e A_{en}^+,\e B_{en}^+)\p_{y_i}\phi^{+}_{0}\right.\right.\\
  &\qquad\quad\quad\left.\left.-\p_{\p_{y_i}\phi^-_{\eta}}W_2(\n \varphi_b^{-}+\n \phi^{-}_{\eta},\e A_{en}^-,\e B_{en}^-)\p_{y_i}\phi^{-}_{0}\right)
  \right\|_{0,\alpha ;[0,L]}\\
  &\leq \sum_{I=\pm}\left\|  W_2(\n \varphi_b^I+\n \phi^I_{\e \eta},\e A_{en}^I,\e B_{en}^I)
  - W_2(\n\varphi_b^I+\n \phi^I_{\eta},\e A_{en}^I,\e B_{en}^I)\right.\\
  &\quad\quad\quad\left.-\tau\sum_{i=1}^2\p_{\p_{y_i} \phi^I_{\eta}} W_2(\n\varphi_b^I+\n \phi^I_{\eta},\e A_{en}^I,\e B_{en}^I)\p_{y_i}\phi^{I}_{\tau}\right\|
  _{0,\alpha ;[0,L]}\\
  &\quad+\tau \sum_{I=\pm}\left\|\sum_{i=1}^2\p_{\p_{y_i} \phi^I_{\eta}} W_2(\n\varphi_b^I+\n \phi^I_{\eta},\e A_{en}^I,\e B_{en}^I)\p_{y_i}(
  \phi^{I}_{\tau}-\phi^{I}_{0})
  \right\|
  _{0,\alpha ;[0,L]}\\
   &\leq \mc\tau^2\sum_{I=\pm}\left(\|\phi^{I}_{\tau}\|
_{2,\alpha;\mn^I}^{(-1-\alpha)}
  \right)^2+ \mc\tau\sum_{I=\pm}\|\phi^{I}_{\tau}
  -\phi^{I}_{0}\|
 _{2,\alpha;\mn^I}^{(-1-\alpha)}\\
&\leq \mc\tau^2
  \left(\| \eta_1\|_{0,\alpha ;[0,L]}
  \right)^2,\\
  \end{aligned}
  \end{equation}
   where   $ \mc>0 $  depends  on $ ({\bm U}_b^+,{\bm U}_b^-) $, $ L $ and $ \alpha $. This implies that
  \begin{equation*}
  \frac{\| \mq(\bm\zeta_0,\eta+\tau \eta_1)-\mq(\bm\zeta_0,\eta)- D_\eta\mq(\bm\zeta_0,\eta)( \tau\eta_1)\|_{0,\alpha ;[0,L]}}
  {\tau\|\eta_1\|_{0,\alpha ;[0,L]}}\rightarrow 0
  \end{equation*}
  as $ \tau\rightarrow 0 $. Thus $ D_\eta\mq(\bm\zeta_0,\eta) $ is   the Fr\'{e}chet derivative of the functional $ \mq(\bm\zeta_0,\eta)$ with respect to $ \eta $.
\par It remains to prove  the continuity of the map $ \mq(\bm\zeta_0,\eta) $ and $ D_\eta\mq(\bm\zeta_0,\eta) $. For any fixed $(\bm \zeta_0, \eta)\in V_0\times V $, we assume that $ (\bm \zeta_0^k,\eta^k)\rightarrow (\bm \zeta_0, \eta) $ in $ V_0\times V $ as $ k\rightarrow \infty $. Then we first show that as $ k\rightarrow \infty $,
  \begin{equation}\label{5-20}
  \mq(\bm\zeta_0^k,\eta^k)\rightarrow  \mq(\bm\zeta_0,\eta), \quad {\rm{in}}\quad  V.
  \end{equation}
  By \eqref{5-7}, the solutions $ \phi^\pm_{ \eta^k}$ corresponding to $ (\bm\zeta_0^k,\eta^k) $ satisfy
  \begin{equation}\label{5-21}
  \begin{cases}
  \begin{aligned}
&\sum_{i,j=1,2}\p_{y_i}(a_{ij}(\n
   \phi_{\eta^k}^\pm,\e A_{en}^{\pm,k},\e B_{en}^{\pm,k})\p_{y_j}\phi^\pm_{ \eta^k})=\sum_{i=1}^2\p_{y_i}f_i^{\pm,k}, \\
&\phi^\pm_{\eta^k}(0,y_2)=g_{1}^{\pm,k}(y_2), \\
 &\phi^\pm_{ \eta^k}(y_1,0)=\int_{0}^{y_1}\eta^k(s)\de s, \\
 & \phi^\pm_{ \eta^k}(y_1,\pm m^{\pm})=g_{2}^{\pm,k}(y_1),\\
  &\p_{y_1}\phi^\pm_{ \eta^k}(L,y_2)=\omega_{ex}^k(\varphi_b^\pm(y_2)+\phi^\pm_{ \eta^k}(L,y_2)),\\
\end{aligned}
  \end{cases}
\end{equation}
where
   \begin{equation*}
      \begin{aligned}
      &f_i^{\pm,k}=W_i(\n\varphi_b^\pm, A_{b}^\pm,  B_{b}^\pm)-W_i(\n\varphi_b^\pm, \e
      A_{en}^{\pm,k}, \e B_{en}^{\pm,k}) , \quad i=1,2,\\
       &g_{1}^{\pm,k}(y_2)=\int_{0}^{y_2}\left(\frac{1}{\e J_{en}^{\pm,k}(s)}-\frac{1}{ J_b^\pm}\right)\de s,\\
    &g_{2}^{\pm,k}(y_1)=g^\pm(y_1)\pm 1+\int_{0}^{\pm m^{\pm}}\left(\frac{1}{\e J_{en}^{\pm,k}(s)}-\frac{1}{ J_b^\pm}\right)\de s.\\
    \end{aligned}
    \end{equation*}
Taking the difference of \eqref{5-21} and \eqref{5-7}, one can derive that
 \begin{equation}\label{5-22}
  \begin{cases}
  \begin{aligned}
  &\sum_{i,j=1,2}\p_{y_i}(a_{ij}(\n
   \phi_{\eta}^\pm,\e A_{en}^{\pm},\e B_{en}^{\pm})\p_{y_j}(\phi^\pm_{ \eta^k}-
   \phi_{\eta}^\pm)\\
  &=-\sum_{i,j=1,2}\p_{y_i}\left({a_{ij}(\n
   \phi_{\eta^k}^\pm,\e A_{en}^{\pm,k},\e B_{en}^{\pm,k})-a_{ij}(\n
   \phi_\eta^\pm,\e A_{en}^{\pm},\e B_{en}^{\pm})}\p_{y_j}\phi_{ \eta^k}^{\pm}\right)
   +\sum_{i=1}^2\p_{y_i}(f_i^{\pm,k}-f_i^{\pm}),\\
   &  (\phi^\pm_{ \eta^k}-
   \phi_{\eta}^\pm)(0,y_2)=g_{1}^{\pm,k}(y_2)-g_{1}^{\pm}(y_2),\\
  &(\phi^\pm_{ \eta^k}-
   \phi_{\eta}^\pm)(y_1,0)= \int_{0}^{y_1}(\eta^k(s)-\eta(s))\de s, \\
  &(\phi^\pm_{ \eta^k}-
   \phi_{\eta}^\pm)(y_1,\pm m^\pm)=g_{2}^{\pm,k}(y_1)-g_{2}^{\pm}(y_1), \\
   &\p_{y_1}(\phi^\pm_{ \eta^k}-
   \phi_{\eta}^\pm)(L,y_2)= \omega_{ex}^k(\varphi_b^\pm(y_2)+\phi^\pm_{ \eta^k}(L,y_2))-\omega_{ex}(\varphi_b^\pm(y_2)+\phi^\pm_{ \eta}(L,y_2)).\\
    \end{aligned}
    \end{cases}
  \end{equation}
 Then the estimate \eqref{4-11} implies  that
\begin{equation*}
\sum_{I=\pm}\|\phi_{\eta^k}^I- \phi_{\eta}^I\|_{2,\alpha;\mn^I}^{(-1-\alpha)}\leq \mc \left(\|\bm\zeta_0^k-\bm\zeta_0\|_{V_0}+\|\eta^k- {\eta}\|_{0,\alpha ;[0,L]}\right).
  \end{equation*}
  Therefore,
  \begin{equation}\label{5-23}
  \begin{aligned}
   &\|\mq(\bm\zeta_0^k,\eta^k)-\mq(\bm\zeta_0,\eta)\|
   _{0,\alpha ;[0,L]}\\
  &=\left\|\left(W_2(\n \varphi_b^++\n \phi_{\eta^k}^+ ,\e A_{en}^{+,k},\e B_{en}^{+,k} )-  W_2(\n \varphi_b^-+\n \phi_{\eta^k}^-,\e A_{en}^{-,k},\e B_{en}^{-,k} )\right)\right.\\
  &\quad\quad-\left.\left(W_2(\n \varphi_b^++\n \phi_{\eta}^+ ,\e A_{en}^+,\e B_{en}^+ )-  W_2(\n \varphi_b^-+\n \phi_{\eta}^-,\e A_{en}^-,\e B_{en}^- )\right)\right\| _{0,\alpha ;[0,L]}\\
  &\leq \mc\sum_{I=\pm}\left(\|\phi_{\eta^k}^I- \phi_{\eta}^I\|_{2,\alpha;\mn^I}^{(-1-\alpha)}
  +\|\bm\zeta_0^k-\bm\zeta_0\|_{V_0}\right)\leq  \mc \left(\|\bm\zeta_0^k-\bm\zeta_0\|_{V_0}+\|\eta^k- {\eta}\|_{0,\alpha ;[0,L]}\right),
  \end{aligned}
  \end{equation}
  which yields \eqref{5-20}.
  \par Next, we  prove the continuity of the map $ D_\eta\mq(\bm\zeta_0,\eta) $, i.e. to show that as $ k\rightarrow \infty $,
  \begin{equation}\label{5-24}
  D_\eta\mq(\bm\zeta_0^k,\eta^k)\rightarrow   D_\eta\mq(\bm\zeta_0,\eta), \quad {\rm{in}}\quad  V.
  \end{equation}
   It follows from \eqref{5-12} that the solutions $ \phi^{\pm,k}_{0}$ corresponding to $ (\bm\zeta_0^k,\eta^k) $ satisfy
  \begin{equation}\label{5-25}
  \begin{cases}
  \begin{aligned}
   &\sum_{i,j=1,2}\p_{y_i}(a_{ij}(\n
   \phi_{\eta^k}^\pm,\e A_{en}^{\pm,k},\e B_{en}^{\pm,k})\p_{y_j}\phi^{\pm,k}_{0})\\
  &=-\sum_{i,j=1,2}\p_{y_i}\left(\sum_{l=1}^2\p_{\p_{y_l}
   \phi_{\eta^k}^\pm}a_{ij}(\n
   \phi_{\eta^k}^\pm,\e A_{en}^{\pm,k},\e B_{en}^{\pm,k})\p_{y_l}
   \phi_0^{\pm,k} \p_{y_j}\phi_{ \eta^k}^{\pm}\right), \\
   &  \phi^{\pm,k}_{0}(0,y_2)=0,\\
  &\phi^{\pm,k}_{0}(y_1,0)=\int_{0}^{y_1}\eta_1(s)\de s, \\
  &\phi^\pm_{ \eta^k}(y_1,\pm m^{\pm})=0, \\
   &\p_{y_1}\phi^{\pm,k}_{0}(L,y_2)= (\omega_{ex}^k)^\prime(\varphi_b^\pm(y_2)+\phi_{ \eta^k}^{\pm}(L,y_2))\phi^{\pm,k}_{0}(L,y_2).\\
    \end{aligned}
    \end{cases}
  \end{equation}
Taking the difference of \eqref{5-25} and \eqref{5-12}, one has
  \begin{equation*}
  \begin{aligned}
  \sum_{I=\pm}\|\phi^{I,k}_{0}- \phi^{I}_{0}\|_{2,\alpha;\mn^I}^{(-1-\alpha)}
  \leq \mc\sum_{I=\pm}\left(\|\phi_{\eta^k}^I- \phi_{\eta}^I\|_{2,\alpha;\mn^I}^{(-1-\alpha)}
  +\|\bm\zeta_0^k-\bm\zeta_0\|_{V_0}\right).
  \end{aligned}
  \end{equation*}
  Combining \eqref{5-23} yields that
  \begin{equation}\label{5-26}
  \begin{aligned}
  \sum_{I=\pm}\|\phi^{I,k}_{0}- \phi^{I}_{0}\|_{2,\alpha;\mn^I}^{(-1-\alpha)}
  \leq  \mc \left(\|\bm\zeta_0^k-\bm\zeta_0\|_{V_0}+\|\eta^k- {\eta}\|_{0,\alpha ;[0,L]}\right).
  \end{aligned}
  \end{equation}
  Thus
  \begin{equation}\label{5-27}
  \begin{aligned}
  &\| D_\eta\mq(\bm\zeta_0^k,\eta^k)(\eta_1)-  D_\eta\mq(\bm\zeta_0,\eta)(\eta_1)\|_{0,\alpha ;[0,L]}\\
   &=\sum_{i=1}^2\left\|\left(\p_{\p_{y_i}\phi_{\eta^k}^{+}} W_2(\n \varphi_b^{+}+\n \phi^{+}_{\eta^k},\e A_{en}^{+,k},\e B_{en}^{+,k})\p_{y_i}\phi^{+,k}_{0}\right.\right.\\
  &\qquad\qquad\left.\left.-\p_{\p_{y_i}\phi^-_{\eta^k}}W_2(\n \varphi_b^{-}+\n \phi^{-}_{\eta^k},\e A_{en}^{-,k},\e B_{en}^{-,k})\p_{y_i}\phi^{-,k}_{0}\right)\right.\\
  &\qquad\quad-\left.\left(\p_{\p_{y_i}\phi_{\eta}^{+}} W_2(\n \varphi_b^{+}+\n \phi^{+}_{\eta},\e A_{en}^+,\e B_{en}^+)\p_{y_i}\phi^{+}_{0}\right.\right.\\
  &\qquad\qquad\left.\left.-\p_{\p_{y_i}\phi^-_{\eta}}W_2(\n \varphi_b^{-}+\n \phi^{-}_{\eta},\e A_{en}^-,\e B_{en}^-)\p_{y_i}\phi^{-}_{0}\right)\right\|
  _{0,\alpha ;[0,L]}\\
  &\leq \mc\sum_{I=\pm}\left(\|\phi^{I,k}_{0}- \phi^{I}_{0}\|_{2,\alpha;\mn^I}^{(-1-\alpha)}+ \|\phi^{I}_{\eta^k}- \phi^{I}_{\eta}\|
  _{2,\alpha;\mn^I}^{(-1-\alpha)}\right)+\mc\|\bm\zeta_0^k-\bm\zeta_0\|_{V_0}\\
  &\leq  \mc \left(\|\bm\zeta_0^k-\bm\zeta_0\|_{V_0}+\|\eta^k- {\eta}\|_{0,\alpha ;[0,L]}\right),
  \end{aligned}
  \end{equation}
    which implies that \eqref{5-24} holds.  
    \par In particular, at the background state,
    \begin{equation}\label{5-a}
  \begin{aligned}
 D_\eta\mq(\bm\zeta_b,0)(\eta_1)
  =\frac{  (c_b^+)^2(u_b^+)^3(\rho_b^+)^2}
{(c_b^+)^2-(u_b^+)^2}\p_{y_2}\phi^{+}_{b}(y_1,0)-\frac{  (c_b^-)^2(u_b^-)^3(\rho_b^-)^2}
{(c_b^-)^2-(u_b^-)^2}\p_{y_2}\phi^{-}_{b}(y_1,0).\\
  \end{aligned}
  \end{equation}
  Here $  \phi^{\pm}_b $ are the solutions of the following problems:
\begin{equation}\label{5-b}
\begin{cases}
e_1^\pm\p_{y_1}^2
\phi^{\pm}_b+e_2^\pm\p_{y_2}^2
 \phi^{\pm}_b=0,\\
\phi^{\pm}_b(0,y_2)=0,\\
\phi^{\pm}_b(y_1,0)=\int_0^{y_1}\eta_1(s)\de s,\\
\phi^{\pm}_b(y_1,\pm m^\pm)=0,\\
   \p_{y_1}\phi^{\pm}_b(L,y_2)=0,\\
  \end{cases}
\end{equation}
where
 \begin{equation*}
 e_{1}^\pm=u_b^\pm,\quad
 e_{2}^\pm=\frac{  (c_b^\pm)^2(u_b^\pm)^3(\rho_b^\pm)^2}
{(c_b^\pm)^2-(u_b^\pm)^2}.
\end{equation*}
 \par Define
   \begin{equation}\label{5-c}
 V_b=\{w\in V: w(0)=0\}.
  \end{equation}
  Then    $ D_\eta\mq(\bm\zeta_b,0) $ is  a continuous mapping from $  V$ to $  V_b \subset V $.
\par {\bf Step 2. The isomorphism of $ D_\eta\mq(\bm \zeta_b,0)$. }
\par To prove the isomorphism of $ D_\eta\mq(\bm\zeta_b,0)$, we need to show that for any given function $  P_\ast\in V_b $, there exists a unique $  \eta_\ast\in V  $ such that $ D_\eta\mq(\bm \zeta_b,0)(\eta_\ast)=P_\ast $, i.e.,
\begin{equation}\label{5-28}
\begin{aligned}
&P_\ast(y_1)= \frac{  (c_b^+)^2(u_b^+)^3(\rho_b^+)^2}
{(c_b^+)^2-(u_b^+)^2}\p_{y_2}\phi^{+}_\ast(y_1,0)-
\frac{  (c_b^-)^2(u_b^-)^3(\rho_b^-)^2}
{(c_b^-)^2-(u_b^-)^2}\p_{y_2}\phi^{-}_\ast(y_1,0).
\end{aligned}
\end{equation}
 It follows from \eqref{5-b} that  the solutions $  \phi^{\pm}_\ast $ satisfy
\begin{equation}\label{5-29}
\begin{cases}
e_1^\pm\p_{y_1}^2
\phi^{\pm}_\ast+e_2^\pm\p_{y_2}^2
 \phi^{\pm}_\ast=0,\\
\phi^{\pm}_\ast(0,y_2)=0,\\
\phi^{\pm}_\ast(y_1,0)=\int_0^{y_1}\eta_\ast(s)\de s,\\
\phi^{\pm}_\ast(y_1,\pm m^\pm)=0,\\
   \p_{y_1}\phi^{\pm}_\ast(L,y_2)=0.\\
  \end{cases}
\end{equation}

Let
\begin{equation*}
\begin{cases}
\hat y_1=y_1, \quad \hat y_2=\frac{y_2}{m^+}, \quad (y_1,y_2)\in \mn^+,\\
\hat y_1=y_1, \quad \hat y_2=\frac{y_2}{m^-}, \quad (y_1,y_2)\in \mn^-.\\
\end{cases}
\end{equation*}
Under the coordinates transformation, the domain  $ \mn^+\cup \mn^- $ is transformed into the following
 domain:
 \begin{equation*}
 \hat \mn^+\cup \hat \mn^-=\{(\hat y_1,\hat y_2): 0<\hat y_1 <L, 0<\hat y_2<1\}\cup \{(\hat y_1,\hat y_2): 0<\hat y_1 <L, -1<\hat y_2<0\}.
 \end{equation*}
The boundaries $ \Sigma_0^\pm,\Sigma_w^\pm, \Sigma_L^\pm $ become
\begin{equation*}
\begin{aligned}
 &\hat\Sigma_0^+=\{(\hat y_1,\hat y_2): \hat y_1=0, 0<\hat y_2<1\},\quad
 \hat\Sigma_0^-=\{(\hat y_1,\hat y_2): \hat y_1=0, -1<\hat y_2<0\},\\
 &\hat\Sigma_w^+=\{(\hat y_1,\hat y_2): 0<\hat y_1<L, \hat y_2=1\},\quad
 \hat\Sigma_w^-=\{(\hat y_1,\hat y_2): 0<\hat y_1<L, \hat y_2=-1\},\\
 &\hat\Sigma_L^+=\{(\hat y_1,\hat y_2): \hat y_1=L, 0<\hat y_2<1\}, \quad
 \hat\Sigma_L^-=\{(\hat y_1,\hat y_2): \hat y_1=L, -1<\hat y_2<0\}.
 \end{aligned}
 \end{equation*}
  We stretch $  \phi^{\pm}_\ast $ in the $ \hat y_2 $ direction and also flip $  \phi^{-}_\ast $ into $ \hat \mn^+ $ in the following way:
\begin{equation*}
 \hat\phi^{+}_\ast(\hat y_1,\hat y_2)=\phi^{+}_\ast\left(\hat y_1,
\sqrt{{e_2^+}/{e_1^+}}m^+\hat y_2\right),\quad  \hat\phi^{-}_\ast(\hat y_1,\hat y_2)=\phi^{-}_\ast\left(\hat y_1,
-\sqrt{{e_2^-}/{e_1^-}}m^-\hat y_2\right).
\end{equation*}
Then \eqref{5-29} can be written as
\begin{equation}\label{5-30}
\begin{cases}
\p_{\hat y_1 }^2
 \hat\phi^{\pm}_\ast+\p_{\hat y_2 }^2
 \hat\phi^{\pm}_\ast=0,
&\quad {\rm in}\quad \hat \mn^+,\\
\hat\phi^{\pm}_\ast(0, \hat y_2)=0, &\quad {\rm on}\quad      \hat \Sigma_0^+,\\
\hat\phi^{\pm}_\ast( \hat y_1,1)=0, &\quad {\rm on}\quad      \hat \Sigma_w^+,\\
\hat\phi^{\pm}_\ast( \hat y_1,0)=\int_0^{\hat y_1}\eta_\ast(s)\de s, &\quad {\rm on}\quad      \Sigma,\\
   \p_{\hat y_1}\hat\phi^{\pm}_\ast(L,  \hat y_2)=0, &\quad {\rm on}\quad
    \hat\Sigma_L^+.\\
  \end{cases}
\end{equation}
Therefore $ \hat\phi^{\pm}_\ast $ satisfy the Laplace equation with the same boundary conditions. By the uniqueness, we conclude that
$ \hat\phi^{+}_\ast= \hat\phi^{-}_\ast  $ in $ \hat \mn^+ $.
\par  Furthermore, \eqref{5-28} in the new  coordinates can be rewritten as
\begin{equation}\label{5-31}
\begin{aligned}
& P_\ast( \hat y_1)= \left(\sqrt{e_1^+e_2^+}+ \sqrt{e_1^-e_2^-}\right)\p_{\hat y_2}\hat\phi^{+}_\ast( \hat y_1,0).
\end{aligned}
\end{equation}
Thus we consider the following equation:
\begin{equation}\label{5-32}
\begin{cases}
\p_{\hat y_1 }^2
 \hat\phi^{+}_\ast+\p_{\hat y_2}^2
\hat\phi^{+}_\ast=0,
&\quad {\rm in}\quad \hat \mn^+,\\
\hat\phi^{+}_\ast(0, \hat y_2)=0, &\quad {\rm on}\quad      \hat \Sigma_0^+,\\
\p_{y_2}\hat\phi^{+}_\ast( \hat y_1,0)=\frac{1}{\sqrt{e_1^+e_2^+}+ \sqrt{e_1^-e_2^-}} P_\ast( \hat y_1), &\quad {\rm on}\quad      \Sigma,\\
\hat\phi^{+}_\ast(\hat y_1,1)=0, &\quad {\rm on}\quad      \hat \Sigma_w^+,\\
   \p_{\hat y_1}\hat\phi^{+}_\ast(L, \hat y_2)=0, &\quad {\rm on}\quad
    \hat\Sigma_L^+.\\
  \end{cases}
\end{equation}
Then similar arguments as in Lemma 4.2 yield that \eqref{5-32} has a unique solution  $ \hat\phi^{+}_\ast\in C_{2,\alpha ,\hat\mn^+}^{(-1-\alpha )} $ satisfying
\begin{equation}\label{5-33}
\|\hat\phi^{+}_\ast\| _{2,\alpha ,\hat\mn^+}^{(-1-\alpha )}\leq \mc \| P_\ast\|_{0,\alpha;[0,L]},
\end{equation}
where   $ \mc>0 $  depends  on $ ({\bm U}_b^+,{\bm U}_b^-) $, $ L$ and $ \alpha $.
\par Set $ \eta_\ast( \hat y_1)=\p_{\hat y_1}\hat\phi^{+}_\ast( \hat y_1,0) $,
then \eqref{5-33} shows that $ \eta_\ast\in V  $. Hence we have shown there exists a unique $ \eta_\ast\in V  $ such that $ D_\eta\mq(\bm\zeta_b,0)(\eta_\ast)= P_\ast $. The proof of the isomorphism of $ D_\eta\mq(\bm\zeta_b,0)$ is completed.
\section{Proof of Theorem 2.2}\noindent
\par Now, by the implicit function theorem, there exist  positive constants $ \sigma_5  $ and $ \mc $ depending only on $ (\bm {U}_b^+,\bm {U}_b^-) $, $L $ and $\alpha$ such that for $ \delta_4\leq \sigma_5 $, the equation $ \mq(\bm \zeta_0,\eta)=0 $ has a unique solution $ \eta=\eta(\bm\zeta_0) $ satisfying
\begin{equation}\label{6-1}
\|\eta\|_{0,\alpha;[0,L]}\leq \mc\|\bm \zeta_0-\bm \zeta_b\|_{V_0}= \mc\sigma_v.
\end{equation}
Here $ \delta_4 $ is defined in \eqref{5-3}.  Hence  the contact discontinuity curve $ g_{cd}(y_1)=\int_{0}^{y_1}\eta(s) \de s $ is determined  and  \eqref{6-1} implies that
\begin{equation}\label{6-2}
\|g_{cd}\|_{1,\alpha;[0,L]}\leq \mc_5\|\bm \zeta_0-\bm \zeta_b\|_{V_0}= \mc_5\sigma_v,
\end{equation}
where $ \mc_5>0 $ depends only on $ (\bm {U}_b^+,\bm {U}_b^-) $, $L $ and $\alpha$.
\par We choose $  \sigma_{cd}^\ast $ and  $\mc^\ast $ as
   \begin{equation}\label{6-3}
   \sigma_{cd}^\ast=\min\{\sigma_4,\sigma_5\} \quad {\rm{ and}} \quad \mc^\ast=4\mc_1+\mc_5,
   \end{equation}
   where $ \sigma_4 $  is defined in \eqref{5-17} and $ \mc_1 $ is  defined in \eqref{4-45}.
  Then if $ \sigma_v\leq\sigma_{cd}^\ast $,  $ \mathbf{Problem \ 3.1}  $ has  a unique piecewise smooth subsonic solution $ (\varphi^+,\varphi^-;g_{cd})$ satisfying
 \begin{equation*}
 \|\varphi^+ -\varphi_b^+\|_{2,\alpha;\mn^+}^{(-1-\alpha)}+\|\varphi^- -\varphi_b^-\|_{2,\alpha;\mn^-}^{(-1-\alpha)}+\| g_{cd} \|_{1,\alpha;[0,L]}\leq \mc^\ast\sigma_v.
 \end{equation*}
 Therefore
 \begin{equation}\label{6-4}
 \|\bm U^+ -\bm U_b^+\|_{1,\alpha;\mn^+}^{(-\alpha)}+\|\bm U^- -\bm U_b^-\|_{1,\alpha;\mn^-}^{(-\alpha)}+\| g_{cd} \|_{1,\alpha;[0,L]}\leq \mc^\ast\sigma_v.
 \end{equation}
 \par Since the coordinates transformation \eqref{3-4} is invertible, thus the solution transformed back in x-coordinates solves $ \mathbf{Problem \ 2.1}  $ and the estimate \eqref{6-4}
 implies that the estimates \eqref{2-13} and \eqref{2-14} hold in   Theorem 2.2. The proof of
  Theorem 2.2 is completed.
  \par {\bf Acknowledgement.} Weng is partially supported by National Natural Science Foundation of China  11971307, 12071359, 12221001.
  
\end{document}